\providecommand{\U}[1]{\protect\rule{.1in}{.1in}}
\theoremstyle{plain}
\newtheorem{corollary}{Corollary}
\newtheorem{definition}{Definition}
\newtheorem{lemma}{Lemma}
\newtheorem{problem}{Question}
\newtheorem{proposition}{Proposition}
\newtheorem{remark}{Remark}
\newtheorem{theorem}{Theorem}
\numberwithin{equation}{section}
\begin{document}
\title[Deformation of Hyperbolic Cone-Structures]{Deformation of Hyperbolic Cone-Structures: Study of the non-Colapsing case}
\author{Alexandre Paiva Barreto}
\address{Universidade Federal de S\~{a}o Carlos}
\email{alexandre@dm.ufscar.br}
\urladdr{}
\thanks{}
\thanks{This work is part of the doctoral thesis of the autor, made at Universit Paul
Sabatier-Toulouse under the supervision of Professor Michel Boileau and
supported by CAPES and FAPESP}
\date{March 10, 2011}
\subjclass[2000]{Primary 57M50; Secondary 57N16, 53C23}
\keywords{}
\dedicatory{Dedicated to my wife Cynthia.}
\begin{abstract}
This work is devoted to the study of deformations of hyperbolic cone
structures under the assumption that the lengths of the singularity remain
uniformly bounded over the deformation. Given a sequence $\left(  M_{i}%
,p_{i}\right)  $ of pointed hyperbolic cone-manifolds with topological type
$\left(  M,\Sigma\right)  $, where $M$ is a closed, orientable and irreducible
$3$-manifold and $\Sigma$ an embedded link in $M$. Assuming that the lengths
of the singularity remain uniformly bounded, we prove that either the sequence
$M_{i}$ collapses and $M$ is Seifert fibered or a $Sol$ manifold, or the
sequence $M_{i}$ does not collapse and in this case a subsequence of $\left(
M_{i},p_{i}\right)  $ converges to a complete Alexandrov space of dimension 3
endowed with a hyperbolic metric of finite volume on the complement of a
finite union of quasi-geodesics. We apply this result to a conjecture of
Thurston and to the case where $\Sigma$ is a small link in $M$.

\end{abstract}
\maketitle

\section{Introduction}

\qquad Fixed a closed, orientable and irreducible $3$-manifold $M$, this text
focus deformations of hyperbolic cone structures on $M$ which are singular
along a fixed embedded link $\Sigma=\Sigma_{1}\sqcup\ldots\sqcup\Sigma_{l}$ in
$M$. A \textit{hyperbolic cone structure} with topological type $\left(
M,\Sigma\right)  $ is a complete intrinsic metric on $M$ (see section $2$ for
the definition) such that every non-singular point (i.e. every point in
$M-\Sigma$) has a neighborhood isometric to an open set of $\mathbb{H}^{3}$,
the hyperbolic space of dimension $3$, and that every singular point (i.e.
every point in $\Sigma$) has a neighborhood isometric to an open neighborhood
of a singular point of $\mathbb{H}^{3}\left(  \alpha\right)  $, the space
obtained by identifying the sides of a dihedral of angle $\alpha\in\left(
0,2\pi\right]  $ in $\mathbb{H}^{3}$ by a rotation about the axe of the
dihedral. The angles $\alpha$ are called \textit{cone angles} and they may
vary from one connected component of $\Sigma$ to the other. By convention, the
complete structure on $M-\Sigma$ (see \cite{Koj2}) is considered as a
hyperbolic cone structure with topological type $\left(  M,\Sigma\right)  $
and cone angles equal to zero.

Unlike hyperbolic structures, which are rigid after Mostow, the hyperbolic
cone structures can be deformed (see \cite{HK2}). The difficulty to understand
these deformations lies in the possibility of degenerating the structure. In
other words, the Hausdorff-Gromov limit of the deformation (see section 2 for
the definition) is only an Alexandrov space which may have dimension strictly
smaller than $3$, although its curvature remains bounded from below by $-1$
(cf. \cite{Koj}). In fact, the works of Kojima, Hodgson-Kerckhoff and Fuji
(see \cite{Koj}, \cite{HK} et \cite{Fuj}) show that the degeneration of the
hyperbolic cone structures occurs if and only if the singular link of these
structures intersects itself over the deformation.

A natural way to study deformations of hyperbolic cone structures on $\left(
M,\Sigma\right)  $ is to consider sequences of hyperbolic cone structures with
topological type $\left(  M,\Sigma\right)  $ converging (in the
Hausdorff-Gromov sense) to the limit Alexandrov space. To study this kind of
sequences, we need the important notion of collapse.

\begin{definition}
We say that a sequence $M_{i}$ of hyperbolic cone-manifolds with topological
type $\left(  M,\Sigma\right)  $ collapses if, for every sequence of points
$p_{i}\in M-\Sigma$, the sequence $r_{inj}^{M_{i}-\Sigma}\left(  p_{i}\right)
$ consisting of their riemannian injectivity radii in $M_{i}-\Sigma$ converges
to zero. Otherwise, we say that the sequence $M_{i}$ does not collapse.
\end{definition}

This dichotomy is very natural and illustrates the intuitive fact that the
volume of the sequence may or may not go to zero.

We are insterested in studying the following question was raised by W.Thuston
in 80's:

\begin{problem}
\label{conjectura thurston}Let $M$ be a closed and orientable hyperbolic
manifold and suppose the existence of a simple closed geodesic $\Sigma$ in
$M$. Can the hyperbolic structure of $M$ be deformed to the complete
hyperbolic structure on $M-\Sigma$ through a path $M_{\alpha}$ of hyperbolic
cone structures with topological type $\left(  M,\Sigma\right)  $ and
parametrized by the cone angles $\alpha\in\left[  0,2\pi\right]  $?
\end{problem}

We started studying this question in \cite{Bar2}. In that paper we obtained
the following result :

\begin{theorem}
\label{teoprincipal art1}Let $M$ be a closed, orientable and irreducible
$3$-manifold and let $\Sigma=\Sigma_{1}\sqcup\ldots\sqcup\Sigma_{l}$ be an
embedded link in $M$. Suppose the existence of a sequence $M_{i}$ of
hyperbolic cone-manifolds with topological type $\left(  M,\Sigma\right)  $
and having cone angles $\alpha_{ij}\in\left(  0,2\pi\right]  $. Denote by
$\mathcal{L}_{M_{i}}\left(  \Sigma_{j}\right)  $ the length of the connected
component $\Sigma_{j}$ of $\Sigma$ in the hyperbolic cone-manifold $M_{i}$. If%
\begin{equation}
\sup\left\{  \mathcal{L}_{M_{i}}\left(  \Sigma_{j}\right)  \;;\;i\in
\mathbb{N}\text{ and }j\in\left\{  1,\ldots,l\right\}  \right\}
<\infty\label{controledocomprimento}%
\end{equation}
and the sequence $M_{i}$ collapses, then $M$ is Seifert fibered or a $Sol$ manifold.
\end{theorem}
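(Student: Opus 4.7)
The plan is to follow the general strategy of collapsing theory in dimension three (Cheeger--Fukaya--Gromov, Shioya--Yamaguchi, Morgan--Tian), adapted to the hyperbolic cone--manifold setting. The collapsing hypothesis gives $r_{inj}^{M_i-\Sigma}(p_i)\to 0$ for every sequence $p_i\in M_i-\Sigma$, while the bound \eqref{controledocomprimento} forces the collapse to extend across the singular locus and prevents $\Sigma$ from escaping to infinity in any rescaled limit. The conclusion one is aiming for is the existence of an $F$--structure of positive rank on $M$, which in dimension three forces $M$ to be Seifert fibered or, in the most degenerate case, a torus bundle of $Sol$ type.

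First I would exhibit the local geometric models. Pick $p_i\in M_i-\Sigma$ and set $\varepsilon_i=r_{inj}^{M_i-\Sigma}(p_i)$. Rescale the metric on $M_i$ by $1/\varepsilon_i$ and pass to an equivariant Gromov--Hausdorff limit of appropriate local universal covers, as in the proof of Margulis' lemma for hyperbolic cone manifolds. Because hyperbolic space is rescaled to Euclidean space in the limit and the cone angles lie in $(0,2\pi]$, the limiting local model is a quotient of a flat space by a discrete group of isometries, generated at $p_i$ by a short loop (geodesic or rotation) of uniformly bounded length. This provides at every non-singular point of $M_i$ a local fibration by short closed curves whose directions vary continuously, i.e.\ a local $S^{1}$--action or a pure $F$--structure on a uniform scale around $p_i$.

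Next I would globalize these local structures and incorporate $\Sigma$. The uniform bound \eqref{controledocomprimento} guarantees that each component $\Sigma_j$ is contained in a tubular (Margulis) neighborhood whose core has bounded geometry, so that the short-loop fibration constructed on $M_i-\Sigma$ matches up with $\Sigma_j$ being either a regular or exceptional fiber. A ball-by-ball patching argument, analogous to the construction of nilpotent Killing structures in the smooth case and using the rigidity of rescaled local models to identify the fiber directions on overlaps, assembles these pieces into a global $F$--structure on $M$ of positive rank. Since $M$ is closed, orientable and irreducible, the presence of such an $F$--structure yields a Seifert fibration of $M$ except possibly when the dimension of the limiting Alexandrov space is zero or one, where the structure degenerates to a $T^{2}$--bundle over $S^{1}$ with Anosov monodromy, hence $Sol$.

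The main obstacle is that all of the classical collapse-and-fibration machinery (Yamaguchi's fibration theorem, stability, existence of nilpotent Killing structures) is formulated for smooth Riemannian manifolds of bounded curvature, while our $M_i$ carry cone singularities along $\Sigma$ where the curvature is only bounded below. The real work is therefore twofold: (i) to establish a cone--manifold version of the Margulis lemma which produces the short loops needed to start the rescaling argument, and (ii) to show that the fibration coming from the non-singular part extends smoothly (in the cone-manifold sense) across the singular locus, with $\Sigma$ as a union of fibers. It is precisely the length bound \eqref{controledocomprimento} that makes (ii) tractable, since it controls the diameter of the Margulis tubes around $\Sigma$ in the rescaled pictures.
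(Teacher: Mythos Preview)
The theorem you are trying to prove is not proved in this paper. It is quoted from the author's companion article \cite{Bar2} on the collapsing case, and the present paper takes it as input: see the sentence ``We started studying this question in \cite{Bar2}. In that paper we obtained the following result'' immediately before the statement, and Remark~1 after Theorem~\ref{teo principal introducao}. All of Section~3 here is devoted to the \emph{non}-collapsing case. So there is no proof in this source against which to compare your proposal.

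That said, your outline is a reasonable sketch of the standard strategy one expects for such a result, and it is honest about where the real work lies. A few cautions if you intend to flesh it out. First, the paper's definition of collapse is that $r_{inj}^{M_i-\Sigma}(p_i)\to 0$ for \emph{every} choice of sequence $p_i$; this is what you use, but be careful that your local rescaling models must be shown to be compatible \emph{globally}, not just pointwise, and that compatibility is exactly where the length bound on $\Sigma$ enters. Second, the dichotomy ``Seifert fibered or $Sol$'' does not arise quite as you phrase it (as ``dimension of the limit is zero or one''); in the Shioya--Yamaguchi framework the $Sol$ case comes from a one-dimensional limit with specific fiber data, and you should expect to invoke irreducibility of $M$ to rule out connected sums and graph pieces. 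Third, your step (ii) --- extending the fibration across $\Sigma$ --- is the heart of the matter and is only asserted, not argued; the bound \eqref{controledocomprimento} controls lengths but not, a priori, the normal injectivity radii $R_i(\Sigma_j)$, and you will need to say why the Margulis tube around $\Sigma_j$ is compatible with the ambient $F$-structure rather than transverse to it. None of this is fatal to the approach, but as written the proposal is a plan rather than a proof.
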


As a consequence of this theorem, we obtained the following result yielding
some information on Thurston's question (\ref{conjectura thurston}).

\begin{corollary}
\label{corolario do art1}Let $M$ be a closed and orientable hyperbolic
$3$-manifold and suppose the existence of a finite union of simple closed
geodesics $\Sigma$ in $M$. Let $M_{\alpha}$ be a (angle decreasing)
deformation of this structure along a continuous path of hyperbolic
cone-structures with topological type $\left(  M,\Sigma\right)  $ and having
cone angles $\alpha\in\left(  L,2\pi\right]  \subset\left[  0,2\pi\right]  $
(the same for all components of $\Sigma$). If%
\begin{equation}
\sup\left\{  \mathcal{L}_{M_{\alpha}}\left(  \Sigma_{j}\right)  \;;\;\alpha
\in\left(  L,2\pi\right]  \text{ and }j\in\left\{  1,\ldots,l\right\}
\right\}  <\infty\text{,}\label{controlecomprimento2}%
\end{equation}
then every convergent (in the Hausdorff-Gromov sense) sequence $M_{\alpha_{i}%
}$, with $\alpha_{i}$ converging to $L$, does not collapses.
\end{corollary}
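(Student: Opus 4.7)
The plan is to apply Theorem \ref{teoprincipal art1} to an arbitrary convergent sequence $M_{\alpha_i}$ with $\alpha_i\to L$, and then use the fact that $M$ is hyperbolic to eliminate the collapsing alternative. First I would verify that all hypotheses of that theorem are satisfied in the present setting. The manifold $M$ is closed and orientable by assumption, and is moreover irreducible: since $M$ is a closed orientable hyperbolic $3$-manifold, its universal cover is $\mathbb{H}^{3}\cong\mathbb{R}^{3}$, so $\pi_{2}(M)=0$, and the Sphere Theorem forces every embedded $2$-sphere to bound a ball. The link $\Sigma$ is embedded (its components are simple closed geodesics of the initial hyperbolic structure), and the uniform length bound \eqref{controlecomprimento2} along the whole path $\alpha\in(L,2\pi]$ specializes immediately to the discrete bound \eqref{controledocomprimento} along the subsequence $\alpha_i\to L$.

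With the hypotheses verified, Theorem \ref{teoprincipal art1} gives the dichotomy: either the sequence $M_{\alpha_i}$ does not collapse, or $M$ is Seifert fibered or a $Sol$ manifold. I would then rule out the second alternative by appealing to the fact that a closed orientable hyperbolic $3$-manifold is neither Seifert fibered nor a $Sol$ manifold. At the level of fundamental groups this is transparent: $\pi_{1}(M)$ is a uniform lattice in $\mathrm{Isom}(\mathbb{H}^{3})$ and is therefore word-hyperbolic, so it contains no $\mathbb{Z}^{2}$ subgroup; on the other hand, any Seifert fibered $3$-manifold has a normal infinite cyclic subgroup (the class of the regular fiber) in its fundamental group, and any $Sol$ manifold has a virtually solvable fundamental group. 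Neither configuration is compatible with $\pi_{1}(M)$ being word-hyperbolic, so the second alternative of the dichotomy cannot occur.

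Combining these two observations yields the conclusion of the corollary by contraposition. There is no substantive technical obstacle: all of the deep analytic and geometric work lives inside Theorem \ref{teoprincipal art1}, while the present corollary amounts to a direct specialization in which the hyperbolicity of the ambient manifold $M$ is used to exclude the only bad case. The one step that one must be careful about, though it is minor, is the translation between a \emph{path} of cone-structures (as in the corollary) and a \emph{sequence} of cone-manifolds (as in the theorem): one simply extracts $M_{\alpha_i}$ from the path and notes that the collapse/non-collapse dichotomy does not depend on the choice of basepoints, so the pointed formulation of Theorem \ref{teoprincipal art1} applies without modification.
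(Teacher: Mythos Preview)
Your proposal is correct and follows exactly the approach the paper intends: the corollary is stated as an immediate consequence of Theorem~\ref{teoprincipal art1}, and the analogous argument written out explicitly later (in the proof of Corollary~\ref{corolario da conjectura}) reads simply ``Since $M$ is hyperbolic (and therefore is neither Seifert fibered nor a $Sol$ manifold), it follows from Theorem~\ref{teo principal} that the sequence $M_{\alpha_i}$ does not collapse.'' Your verification of irreducibility and of the length bound are appropriate, and the path-versus-sequence remark is harmless since the collapse condition in Definition~1 is already phrased for sequences $M_i$ without reference to basepoints.
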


The additional hypothesis on the length of the singularity is automatically
verified when the holonomy representations of the hyperbolic cone structures
are convergent. This happens (cf. \cite{CS}), for example, when $\Sigma$ is a
small link in $M$ (see section 4.1 for the definition). If the deformation
proposed by Thurston in (\ref{conjectura thurston}) exists, it is a
consequence of Thurston's hyperbolic Dehn surgery Theorem that the length of
the singular link must converge to zero. In particular, we have that its
length remains uniformly bounded over the deformation. This remark makes clear
that the additional hypothesis on the length of the singularity is, in fact, a
necessary condition for the existence of the desired deformation.

When a convergent sequence of hyperbolic cone-manifolds collapses, most of the
geometric information can be lost. This happens because the dimension of the
limit Alexandrov space may be strictly smaller than 3 (see \cite{Bar2}). On
the non-collapsing case, however, the limit Alexandrov space must have
dimension 3. Our goal is to use all geometric information on the three
dimensional limit to study deformations of hyperbolic cone-structures that do
not collapse.

The principal result of this paper is the following one:

\begin{theorem}
\label{teo principal}\label{teo principal introducao}Let $M$ be a closed,
orientable and irreducible $3$-manifold and let $\Sigma=\Sigma_{1}\sqcup
\ldots\sqcup\Sigma_{l}$ be an embedded link in $M$. Suppose the existence of a
sequence $M_{i}$ of hyperbolic cone-manifolds with topological type $\left(
M,\Sigma\right)  $ and having cone angles $\alpha_{ij}\in\left(
0,2\pi\right]  $. Denote by $\mathcal{L}_{M_{i}}\left(  \Sigma_{j}\right)  $
the length of the connected component $\Sigma_{j}$ of $\Sigma$ in the
hyperbolic cone-manifold $M_{i}$. If%
\[
\sup\left\{  \mathcal{L}_{M_{i}}\left(  \Sigma_{j}\right)  \;;\;i\in
\mathbb{N}\text{ and }j\in\left\{  1,\ldots,l\right\}  \right\}  <\infty,
\]
then one of the following statements holds:

\begin{enumerate}
\item[$i.$] the sequence $M_{i}$ collapses and $M$ is Seifert fibered or a
$Sol$ manifold,

\item[$ii.$] the sequence $M_{i}$ does not collapse and there exists a
sequence of points $p_{i_{k}}\in M-\Sigma$ such that the sequence $\left(
M_{i_{k}},p_{i_{k}}\right)  $ converges in the Hausdorff-Gromov sense to a
three dimensional pointed Alexandrov space $\left(  Z,z_{0}\right)  $. The
Alexandrov space $Z$ is endowed with a (noncomplete) hyperbolic metric of
finite volume on the complement of a finite union $\Sigma_{Z}$ of
quasi-geodesics. Moreover, $Z$ is homeomorphic to $M$ (in particular, $Z$ is
compact) if there exists $\varepsilon\in\left(  0,2\pi\right)  $ such that the
cone angles $\alpha_{ij}$ belong to $\left(  \varepsilon,2\pi\right]  $.
Moreover, the following three statements are equivalent:

\begin{itemize}
\item $Z$ is compact

\item $\inf\left\{  cone-angle_{M_{i_{k}}}\left(  \Sigma_{j}\right)
\;;\;k\in\mathbb{N}\ and\ \Sigma_{j}\subset\Sigma\right\}  >0$

\item $\inf\left\{  \mathcal{L}_{M_{i_{k}}}\left(  \Sigma_{j}\right)
\;;\;k\in N\right\}  >0$, for each component $\Sigma_{j}$ of $\Sigma$.
\end{itemize}
\end{enumerate}
\end{theorem}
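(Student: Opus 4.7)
The first alternative is covered by Theorem \ref{teoprincipal art1}, so the plan focuses entirely on the non-collapsing case. The first step is to extract a three-dimensional pointed Hausdorff-Gromov limit. Each $M_i$ is an Alexandrov space of curvature $\geq -1$ (which uses crucially $\alpha_{ij} \leq 2\pi$). The non-collapse hypothesis lets me pass to a subsequence and choose basepoints $p_{i_k} \in M - \Sigma$ with $r_{inj}^{M_{i_k} - \Sigma}(p_{i_k}) \geq r_0 > 0$. Bishop--Gromov comparison then yields a uniform lower volume bound for fixed-size balls around $p_{i_k}$, and Gromov's precompactness theorem delivers a pointed Alexandrov limit $(Z,z_0)$ of curvature $\geq -1$ along a further subsequence. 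The uniform volume bound at the basepoint forces $\dim Z = 3$.

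The next step identifies the geometric structure of $Z$. Each component $\Sigma_j$ is a closed geodesic of length at most $L$ in every $M_{i_k}$, so after a diagonal extraction the curves $\Sigma_j \subset M_{i_k}$ converge in the pointed Hausdorff sense inside $Z$ to a compact subset $\Sigma_{Z,j}$; set $\Sigma_Z = \bigcup_j \Sigma_{Z,j}$. Since Hausdorff limits of geodesics in Alexandrov spaces of curvature bounded below are quasi-geodesics in the sense of Petrunin, each $\Sigma_{Z,j}$ is a quasi-geodesic in $Z$. On the complement of $\Sigma_Z$ the convergence can be refined to smooth convergence of constant-curvature $-1$ metrics on compact sets (using local rigidity away from the singular set), so $Z - \Sigma_Z$ inherits a hyperbolic metric. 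A uniform upper bound on $\operatorname{Vol}(M_{i_k})$ follows from Schl\"{a}fli's variation formula together with the hypothesized length control, and lower semicontinuity of volume under Hausdorff-Gromov convergence gives $\operatorname{Vol}(Z - \Sigma_Z) < \infty$.

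The three equivalences characterising the compactness of $Z$ are handled through the Margulis-tube picture for hyperbolic cone-manifolds (Kojima, Hodgson-Kerckhoff). A uniform positive lower bound on the cone angles produces standard tubes around each $\Sigma_j$ whose radii stay bounded away from zero, preventing the length from degenerating; conversely, a positive length lower bound combined with the hypothesized upper length bound traps the cone angles away from $0$, for a cone angle tending to $0$ would, via Thurston's hyperbolic Dehn surgery picture in reverse, open the tube into a cusp of infinite height in the limit, escaping every compact set. The same tube control yields a uniform diameter bound on $M_{i_k}$ when the cone angles remain in $[\varepsilon,2\pi]$, so $Z$ is compact; Perelman's stability theorem applied on $Z - \Sigma_Z$ and patched over the controlled model tubes covering $\Sigma_Z$ then upgrades the Hausdorff-Gromov convergence to a homeomorphism $Z \cong M$.

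The main obstacle is precisely this last patching step. It requires a uniform structural description of the singular tubes for cone-manifolds with cone angles in $[\varepsilon, 2\pi]$, one strong enough to persist in the Hausdorff-Gromov limit and to match across $\Sigma_Z$ with the smooth hyperbolic convergence on the complement. The available tube estimates are cleanest for small cone angles and weaken as the angle approaches $2\pi$, so some care is needed to keep the constants uniform over the entire range of angles in the sequence; I expect the heart of the technical work to lie here.
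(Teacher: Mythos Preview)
Your early steps---extracting a three-dimensional Alexandrov limit, identifying the hyperbolic structure on $Z\setminus\Sigma_Z$, and bounding its volume---match the paper closely (the paper cites the inequality $Vol(M_i)<Vol(M_0)$ from \cite{Dun,Fra} rather than Schl\"afli, but either route works). You also omit the dichotomy $\Sigma=\Sigma_0\sqcup\Sigma_\infty$ (components at bounded versus unbounded distance from the basepoint), which is what organises the paper's argument; your description of $\Sigma_Z$ implicitly assumes every component stays near $p_{i_k}$.

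The genuine gap is in your treatment of the implication ``cone angles bounded below $\Rightarrow$ $Z$ compact.'' You assert that ``tube control yields a uniform diameter bound on $M_{i_k}$,'' but bounding the normal injectivity radii of the $\Sigma_j$ does not by itself bound $diam(M_{i_k})$: the smooth part away from the tubes could still stretch. The paper never attempts a direct diameter estimate. Instead it argues from the limit outward: since $Z\setminus\Sigma_Z$ has finite volume, Proposition~\ref{classif da parte fina} forces every unbounded end of $Z$ to be a rank-$2$ parabolic cusp. Each cusp torus is then pulled back via the bilipschitz approximations to an embedded torus $T_{ki}\subset M_i-\Sigma$, which by Lemma~\ref{Classificacao de Toros} bounds a solid torus $W_{ki}$ in $M$ with $diam(W_{ki})\to\infty$. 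The core of the proof (Lemmas~\ref{merid nao estabiliza}, \ref{toros contem geod}, \ref{lambda omega e vazio}) shows that every such $W_{ki}$ must contain a component of $\Sigma$ whose cone angle tends to zero. This is done by contradiction: if some $W_{ki}$ were nonsingular, or contained a singular component with angle bounded away from $0$, one locates a short simple closed geodesic inside it, declares it a new singular component with cone angle $2\pi$, and then applies Thurston's hyperbolic Dehn surgery together with Mostow rigidity to the resulting fillings. The filling slopes are pairwise distinct (Lemma~\ref{merid nao estabiliza}), so the Dehn-filled volumes would converge strictly upward to $Vol(\mathcal M_0)$, yet Mostow forces them all to equal the fixed value $Vol(M_0)$---a contradiction. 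It is this Dehn-filling/volume-rigidity mechanism, not tube estimates, that rules out extraneous ends and establishes the bijection between cusps of $Z$ and components of $\Sigma_\infty$; your proposal contains no analogue of it. The ``patching'' difficulty you flag at the end is comparatively minor once this end analysis is in place.
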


\begin{remark}
Note that the part (i) in the statement of the previous theorem is precisely
the theorem (\ref{teoprincipal art1}).
\end{remark}

\begin{remark}
A by-product of the above theorem is that the length of a connected component
$\Sigma_{j}$ of $\Sigma$ shrinks down to zero if and only if the same arises
for its cone angles $\alpha_{ij}$ (when $i$ goes to infinity). If the cone
angles are supposed to be the same on all of the connected components of
$\Sigma$, it follows from this fact (see Corollary
\ref{criterio ang vai p zero quando sigma e no}) that the sequence of cone
angles converges to zero if and only if the following three statements hold:

\begin{enumerate}
\item[$i.$] $\sup\left\{  \mathcal{L}_{M_{i}}\left(  \Sigma\right)
\;;\;i\in\mathbb{N}\right\}  <\infty$

\item[$ii.$] $\lim\limits_{i\rightarrow\infty}diam\left(  M_{i}\right)
=\infty$

\item[$iii.$] the sequence $M_{i}$ does not collapse.
\end{enumerate}
\end{remark}

\bigskip

As an application of Theorem \ref{teo principal introducao}, we obtain the
following result related to the Thurston's question (\ref{conjectura thurston}).

\begin{corollary}
\label{corolario da conjectura}Let $M$ be a closed and orientable hyperbolic
$3$-manifold and suppose the existence of a finite union of simple closed
geodesics $\Sigma$ in $M$. Let $M_{\alpha}$ be a deformation of this structure
along a continuous path of hyperbolic conical structures with topological type
$\left(  M,\Sigma\right)  $ and having cone angles $\alpha\in\left(
\theta,2\pi\right]  \subset\left[  0,2\pi\right]  $ (the same for all
components of $\Sigma$). Then the following statements are equivalent

\begin{enumerate}
\item[$i.$] $\theta=0$ and the path $M_{\alpha}$ extends continuously to
$\left[  0,2\pi\right]  $, where $M_{0}$ denotes $M-\Sigma$ with the complete
hyperbolic m\'{e}tric

\item[$ii.$] $\lim\limits_{\alpha\rightarrow\theta}\mathcal{L}_{M_{\alpha}%
}\left(  \Sigma\right)  =\lim\limits_{\alpha\rightarrow\theta}\sum
\limits_{i=1}^{l}\mathcal{L}_{M_{\alpha}}\left(  \Sigma_{j}\right)  =0$

\item[$iii.$] There exists a sequence $\alpha_{i}\in\left(  \theta
,2\pi\right]  $ converging to $\theta$ satisfying%
\[
\sup\left\{  \mathcal{L}_{M_{\alpha}}\left(  \Sigma_{j}\right)  \;;\;\alpha
\in\left(  \theta,2\pi\right]  \text{ and }j\in\left\{  1,\ldots,l\right\}
\right\}  <\infty
\]
and such that the sequence $diam\left(  M_{\alpha_{i}}\right)  $ goes to
infinity with $i$.
\end{enumerate}
\end{corollary}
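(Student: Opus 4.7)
The plan is to establish the cyclic chain (ii) $\Rightarrow$ (iii) $\Rightarrow$ (i) $\Rightarrow$ (ii), using Theorem \ref{teo principal introducao} together with Mostow--Prasad rigidity of finite-volume cusped hyperbolic $3$-manifolds and Thurston's hyperbolic Dehn surgery theorem. Since $M$ is hyperbolic, it is neither Seifert fibered nor a $Sol$-manifold, so case (i) of Theorem \ref{teo principal introducao} cannot occur along any sequence $\alpha_i$ with bounded singular lengths: every such sequence lands in case (ii), and we may exploit the three equivalent characterizations of compactness of the limit Alexandrov space $Z$.

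For (ii) $\Rightarrow$ (iii), the hypothesis supplies the uniform length bound, so pick any sequence $\alpha_i \to \theta$ and apply Theorem \ref{teo principal introducao} to obtain, after passing to a subsequence, Hausdorff--Gromov convergence of $(M_{\alpha_{i_k}}, p_{i_k})$ to a three-dimensional Alexandrov space $Z$. Because $\mathcal{L}_{M_{\alpha_{i_k}}}(\Sigma_j)\to 0$ for each component $\Sigma_j$, the three equivalences force $Z$ to be noncompact, hence $diam(M_{\alpha_{i_k}}) \to \infty$ and (iii) is satisfied.

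For (iii) $\Rightarrow$ (i), apply Theorem \ref{teo principal introducao} to the sequence furnished by (iii): we obtain a convergent subsequence $(M_{\alpha_{i_k}}, p_{i_k}) \to (Z, z_0)$ with $Z$ a three-dimensional Alexandrov space of finite volume, hyperbolic away from a finite union of quasi-geodesics $\Sigma_Z$. Since $diam(M_{\alpha_{i_k}}) \to \infty$, $Z$ is noncompact; the equivalences give $\inf_{k} \alpha_{i_k} = 0$, and since $\alpha_{i_k}\to\theta$ we deduce $\theta = 0$. Extracting a further subsequence along which the length of each $\Sigma_j$ tends to $0$ and combining this with Thurston's hyperbolic Dehn surgery theorem lets one recognize the non-singular part of $Z$ as a complete, finite-volume, cusped hyperbolic $3$-manifold topologically equal to $M-\Sigma$; Mostow--Prasad rigidity then identifies it with the unique such structure $M_0$ on $M-\Sigma$. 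To promote this subsequential convergence to continuity of the entire path, apply the same argument to an arbitrary sequence $\beta_i \to 0$: the uniform length bound from (iii) holds on all of $(0,2\pi]$, so every subsequence of $(M_{\beta_i})$ has a further subsequence converging to $M_0$, and uniqueness of the Hausdorff--Gromov limit yields $M_{\beta_i} \to M_0$.

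Finally, (i) $\Rightarrow$ (ii) follows from Thurston's hyperbolic Dehn surgery theorem: once $M_\alpha \to M_0$ as $\alpha \to 0$, the cone-manifold $M_\alpha$ eventually sits in Thurston's deformation neighborhood of $M_0$, inside which the singular link is realized as a short closed geodesic whose length tends to $0$ with the cone angle. The most delicate step overall is the identification of $Z$ with $M_0$ inside (iii) $\Rightarrow$ (i): one must verify that the quasi-geodesic part $\Sigma_Z$ disappears in the limit (equivalently, that the tubes around the singular links of $M_{\alpha_{i_k}}$ degenerate into genuine cusps) and that the topological type of $Z$ coincides with $M-\Sigma$; this is where a thick/thin decomposition analysis, combined with the hyperbolic Dehn surgery theorem, must precede the invocation of Mostow--Prasad.
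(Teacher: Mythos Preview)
Your argument is correct and follows the same cyclic chain $(i)\Rightarrow(ii)\Rightarrow(iii)\Rightarrow(i)$ as the paper, with essentially the same reasoning for $(i)\Rightarrow(ii)$ and $(ii)\Rightarrow(iii)$.

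The one genuine difference is in $(iii)\Rightarrow(i)$. After deducing $\theta=0$ (which you do exactly as the paper does, via the compactness equivalences in Theorem~\ref{teo principal introducao}), the paper simply invokes Kojima's theorem \cite{Koj}: for cone angles at most $\pi$ the hyperbolic cone structure on $(M,\Sigma)$ is unique and varies continuously down to the complete structure $M_0$, so the path extends automatically. You instead reconstruct this conclusion by hand: identify the limit $Z$ with $M_0$ via Mostow--Prasad (using that, since all components share the same cone angle, $\Sigma_Z=\emptyset$ and $Z$ is a complete cusped hyperbolic manifold homeomorphic to $M-\Sigma$), and then run a subsequence-of-subsequence argument to upgrade to convergence of the full path. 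This works, and it has the virtue of making explicit what Kojima's citation is doing; but it is noticeably heavier than the paper's one-line appeal to \cite{Koj}, and the ``delicate step'' you flag at the end is precisely the content that Kojima's uniqueness theorem packages for free.
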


\begin{remark}
Note that the above corollary provides a necessary and sufficient condition
for the existence of the deformation proposed by Thuston. Using the notations
in the statement of Thurston's question (\ref{conjectura thurston}), we have
that%
\[
\theta=0\Longleftrightarrow\lim\limits_{\alpha\rightarrow\theta}%
\mathcal{L}_{M_{\alpha}}\left(  \Sigma\right)  =0.
\]

\end{remark}

Supposing in addition that $M$ is not Seifert fibered and that $\Sigma$ is a
small link in $M$, we have also the following theorem (see Corollaries
\ref{petit 1} and \ref{petit 2}) providing universal constants for the
hyperbolic cone structures with topological type $\left(  M,\Sigma\right)  $.

\begin{theorem}
Let $M$ be a closed, orientable, irreducible and not Seifert fibered
$3$-manifold and let $\Sigma\ $be a small link in $M$. There exists a constant
$V=V\left(  M,\Sigma\right)  >0$ and a constant $K=K\left(  M,\varepsilon
\right)  >0$, for each $\varepsilon\in\left(  0,2\pi\right)  $, such that:

\begin{enumerate}
\item[$i.$] $Vol\left(  \mathcal{M}\right)  >V$, for every hyperbolic
cone-manifold $\mathcal{M}$ with topological type $\left(  M,\Sigma\right)  $,

\item[$ii.$] $diam\left(  \mathcal{M}\right)  <K$, for every hyperbolic
cone-manifold $\mathcal{M}$ with topological type $\left(  M,\Sigma\right)  $
and having cone angles in the interval $\left(  \varepsilon,2\pi\right]  $.
\end{enumerate}
\end{theorem}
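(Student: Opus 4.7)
The plan is to prove both parts by contradiction, using Theorem \ref{teo principal introducao} as the decisive tool. Both arguments begin the same way: since $\Sigma$ is a small link in $M$, the space of holonomy representations of hyperbolic cone-structures with topological type $\left(M,\Sigma\right)$ is compact by \cite{CS}, so for any sequence of such structures the lengths of the components of $\Sigma$ remain uniformly bounded. This is exactly the hypothesis needed to apply Theorem \ref{teo principal introducao} to any sequence we extract.

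For part (i), I would assume for contradiction the existence of a sequence $\mathcal{M}_{i}$ of hyperbolic cone-manifolds with topological type $\left(M,\Sigma\right)$ and $Vol\left(\mathcal{M}_{i}\right)\to 0$. By Theorem \ref{teo principal introducao}, either $\mathcal{M}_{i}$ collapses, forcing $M$ to be Seifert fibered or $Sol$, or $\mathcal{M}_{i}$ does not collapse and a subsequence $\left(\mathcal{M}_{i_{k}},p_{i_{k}}\right)$ converges in the pointed Hausdorff-Gromov sense to a three-dimensional pointed Alexandrov space $\left(Z,z_{0}\right)$ carrying a finite-volume hyperbolic metric off a finite union of quasi-geodesics. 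The collapse case is excluded: the Seifert alternative by hypothesis, and the $Sol$ alternative because $M-\Sigma$ is hyperbolic (a consequence of smallness), which is incompatible with $M$ being $Sol$ — every essential torus of a $Sol$ manifold would have to meet $\Sigma$, and this restriction is easily seen to be inconsistent with $\Sigma$ being a link whose exterior contains no essential closed surface. In the non-collapsing case I would choose $R>0$ so that $B\left(z_{0},R\right)\subset Z$ meets the smooth hyperbolic part on a set of positive $3$-dimensional Hausdorff measure; by continuity of volumes of balls under pointed Hausdorff-Gromov convergence of Alexandrov $3$-spaces with curvature bounded below, one obtains
\[
Vol\left(\mathcal{M}_{i_{k}}\right)\geq Vol\bigl(B\left(p_{i_{k}},R\right)\bigr)\longrightarrow Vol\bigl(B\left(z_{0},R\right)\bigr)>0,
\]
contradicting $Vol\left(\mathcal{M}_{i}\right)\to 0$.

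For part (ii), I would likewise suppose for contradiction the existence of a sequence $\mathcal{M}_{i}$ with cone angles in $\left(\varepsilon,2\pi\right]$ and $diam\left(\mathcal{M}_{i}\right)\to\infty$. Lengths are bounded by the same smallness argument, Theorem \ref{teo principal introducao} applies, and the collapse case is ruled out exactly as in (i). The lower bound $\varepsilon$ on cone angles is precisely the regime in which the theorem asserts that the limit $Z$ is homeomorphic to $M$, hence compact. Pointed Hausdorff-Gromov convergence of the $\mathcal{M}_{i_{k}}$ to a compact limit forces the diameters $diam\left(\mathcal{M}_{i_{k}}\right)$ to remain bounded for large $k$, contradicting $diam\left(\mathcal{M}_{i}\right)\to\infty$.

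The main obstacle is the exclusion of the collapsing $Sol$ case under the smallness hypothesis; this is a purely topological matter requiring that no closed orientable $Sol$ $3$-manifold admits a small link whose exterior is hyperbolic, and it must be verified separately from the bulk of the analytic machinery. A secondary but genuine technical point is the continuity statement for volumes of balls used in (i), which I would invoke from the theory of Alexandrov spaces of fixed dimension with uniform lower curvature bounds; everything else in the proof is a direct unwinding of Theorem \ref{teo principal introducao} against the two contradiction hypotheses.
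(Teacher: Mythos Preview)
Your approach matches the paper's: both parts are proved by contradiction, invoking Proposition \ref{criterio petit} (smallness of $\Sigma$ gives a uniform length bound via Culler--Shalen) to place oneself in the hypotheses of Theorem \ref{teo principal introducao}, and both require ruling out the $Sol$ alternative.

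Two points where the paper is more efficient than your outline:

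For part (i), there is no need to analyse the non-collapsing branch at all. If the sequence did not collapse, some sequence of points $p_i$ would have $r_{inj}^{M_i}(p_i)\geq r_0>0$, and the embedded hyperbolic ball $B_{M_i}(p_i,r_0)$ already has volume equal to the fixed positive constant $Vol\bigl(B_{\mathbb{H}^3}(0,r_0)\bigr)$, contradicting $Vol(\mathcal{M}_i)\to 0$ directly. The paper simply records that $Vol\to 0$ forces collapse, so the volume-continuity argument for Alexandrov limits you flag as a ``secondary technical point'' is unnecessary.

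The exclusion of the $Sol$ case is not the ``main obstacle'' you make it. The paper dispatches it in one sentence: by \cite[Lemma 3]{HT}, a $3$-manifold containing a small link admits no closed embedded essential surface, while every $Sol$ manifold is foliated by essential tori. Your route through hyperbolicity of $M-\Sigma$ and the condition that essential tori meet $\Sigma$ is more circuitous and, as written, does not quite close; the direct appeal to the Hatcher--Thurston lemma about $M$ itself is both simpler and complete.
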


\section{Metric Geometry}

\qquad Given a metric space $Z$, the metric on $Z$ will always be denoted by
$d_{Z}\left(  \cdot,\cdot\right)  $. The open ball of radius $r>0$ about a
subset $A$ of $Z$ is going to be denoted by%
\[
B_{Z}\left(  A,r\right)  =%
%TCIMACRO{\tbigcup \limits_{a\in A}}%
%BeginExpansion
{\textstyle\bigcup\limits_{a\in A}}
%EndExpansion
\left\{  z\in Z\;;\;d_{Z}\left(  z,a\right)  <r\right\}  \text{.}%
\]
A metric space $Z$ is called a \textit{length space} (and its metric is called
intrinsic) when the distance between every pair of points in $Z$ is given by
the infimum of the lengths of all rectificable curves connecting them. When a
minimizing geodesic between every pair of points exists, we say that $Z$ is
\textit{complete}.

For all $k\in\mathbb{R}$, denote $\mathbb{M}_{k}^{2}$ the complete and simply
connected two dimensional riemannian manifold of constant sectional curvature
equal to $k$. Given a triple of points $\left(  x\;;\;y,z\right)  $ of $Z$, a
\textit{comparison triangle} for the triple is nothing but a geodesic triangle
$\Delta_{k}\left(  \overline{x},\overline{y},\overline{z}\right)  $ in
$\mathbb{M}_{k}^{2}$ with vertices $\overline{x}$, $\overline{y} $ and
$\overline{z}$ such that%
\[
d_{\mathbb{M}_{k}^{2}}\left(  \overline{x},\overline{y}\right)  =d_{Z}\left(
x,y\right)  \;\text{,}\;\;d_{\mathbb{M}_{k}^{2}}\left(  \overline{y}%
,\overline{z}\right)  =d_{Z}\left(  y,z\right)  \;\;\text{and}%
\;\;d_{\mathbb{M}_{k}^{2}}\left(  \overline{z},\overline{x}\right)
=d_{Z}\left(  z,x\right)  \text{.}%
\]
Note that a comparison triangle always exists when $k\leq0$. The
$k$\textit{-angle} of the triple $\left(  x\;;\;y,z\right)  $ is, by
definition, the angle $\measuredangle_{k}\left(  x\;;\;y,z\right)  $ of a
comparision triangle $\Delta_{k}\left(  \overline{x},\overline{y},\overline
{z}\right)  $ at the vertex $\overline{x}$ (assuming the triangle exists).

\begin{definition}
A finite dimensional (in the Hausdorff sense) length space $Z$ is called an
Alexandrov space of curvature not smaller than $k\in\mathbb{R}$ if every point
has a neighborhood $U$ such that, for all points $x,y,z\in U$, the angles
$\measuredangle_{k}\left(  x\;;\;y,z\right)  $, $\measuredangle_{k}\left(
y\;;\;x,z\right)  $ and $\measuredangle_{k}\left(  z\;;\;x,y\right)  $ are
well defined and satisfy%
\[
\measuredangle_{k}\left(  x\;;\;y,z\right)  +\measuredangle_{k}\left(
y\;;\;x,z\right)  +\measuredangle_{k}\left(  z\;;\;x,y\right)  \leq
2\pi\text{.}%
\]

\end{definition}

We point out that every hyperbolic cone-manifold is an Alexandrov space of
curvature not smaller than $-1$.

Suppose from now on that $Z$ is a $n$ dimensional Alexandrov space of
curvature not smaller than $k\in\mathbb{R}$. Consider $z\in Z$ and $\lambda
\in\left(  0,\pi\right)  $. The point $z$ is said to be $\lambda
$\textit{-strained} if there exists a set $\left\{  \left(  a_{i}%
,b_{i}\right)  \in Z\times Z\;;\;i\in\left\{  1,\ldots,n\right\}  \right\}  $,
called a $\lambda$-strainer at $z$, such that $\measuredangle_{k}\left(
x\;;\;a_{i},b_{i}\right)  >\pi-\lambda$ and%
\[
\max\left\{  \left\vert \measuredangle_{k}\left(  x\;;\;a_{i},a_{j}\right)
-\frac{\pi}{2}\right\vert ,\;\left\vert \measuredangle_{k}\left(
x\;;\;b_{i},b_{j}\right)  -\frac{\pi}{2}\right\vert ,\left\vert \measuredangle
_{k}\left(  x\;;\;a_{i},b_{j}\right)  -\frac{\pi}{2}\right\vert \right\}
<\lambda
\]
for all $i\neq j\in\left\{  1,\ldots,n\right\}  $. The set $R_{\delta}\left(
Z\right)  $ of $\lambda$-strained points of $Z$ is called the \textit{set of
}$\lambda$\textit{-regular points of }$Z$. It is a remarkable fact that
$R_{\delta}\left(  Z\right)  $ is an open and dense subset of $Z$.

Recall now, the notion of (pointed) Hausdorff-Gromov convergence (see
\cite{BBI}):

\begin{definition}
Let $\left(  Z_{i},z_{i}\right)  $ be a sequence of pointed metric spaces. We
say that the sequence $\left(  Z_{i},z_{i}\right)  $ converges in the
(pointed) Hausdorff-Gromov sense to a pointed metric space $\left(
Z,z_{0}\right)  $, if the following holds: For every $r>\varepsilon>0$, there
exist $i_{0}\in\mathbb{N}$ and a sequence of (may be non continuous) maps
$f_{i}:B_{Z_{i}}\left(  z_{i},r\right)  \rightarrow Z$ ($i>i_{0}$) such that

\begin{enumerate}
\item[$i.$] $f_{i}\left(  z_{i}\right)  =z_{0}$,

\item[$ii.$] $\sup\left\{  d_{Z^{\prime}}\left(  f_{i}\left(  z_{1}\right)
,f_{i}\left(  z_{2}\right)  \right)  -d_{Z}\left(  z_{1},z_{2}\right)
\;;\;z_{1},z_{2}\in Z\right\}  <\varepsilon$,

\item[$iii.$] $B_{Z}\left(  z_{0},r-\varepsilon\right)  \subset B_{Z}\left(
f_{i}\left(  B_{Z_{i}}\left(  z_{i},r\right)  \right)  ,\varepsilon\right)  $,

\item[$iv.$] $f_{i}\left(  B_{Z_{i}}\left(  z_{i},r\right)  \right)  \subset
B_{Z}\left(  z_{0},r+\varepsilon\right)  $.
\end{enumerate}
\end{definition}

Its a fundamental fact that the class of Alexandrov spaces of curvature not
smaller than $k\in\mathbb{R}$ is pre-compact with respect to the notion of
convergence in the Hausdorff-Gromov sense. In particular, every pointed
sequence of hyperbolic cone-manifolds with constant topological type has a
subsequence converging (in the Hausdorff-Gromov sense) to a pointed Alexandrov
space which may have dimension strictly smaller than $3$, although its
curvature remains bounded from below by $-1$.

\section{Sequences of Hyperbolic cone-manifolds}

\qquad Recall that $M$ denotes a closed, orientable and irreducible
differential manifold of dimension $3$ and that $\Sigma=\Sigma_{1}\sqcup
\ldots\sqcup\Sigma_{l}$ denotes an embedded link in $M$. A sequence of
hyperbolic cone-manifolds with topological type $\left(  M,\Sigma\right)  $
will always be denoted by $M_{i}$.

Given a sequence $M_{i}$ as above, fix indices $i\in\mathbb{N}$ and
$j\in\left\{  1,\ldots,l\right\}  $. For sufficiently small radius $R>0$, the
metric neighborhood%
\[
B_{M_{i}}\left(  \Sigma_{j},R\right)  =\left\{  x\in M_{i}\;;\;d_{M_{i}%
}\left(  x,\Sigma_{j}\right)  <R\right\}
\]
of $\Sigma$ is a solid torus embedded in $M_{i}$. The supremum of the radius
$R>0$ satisfying the above property will be called \textit{normal injectivity
radius of }$\Sigma_{j}$\textit{\ in }$M_{i}$ and it is going to be denoted by
$R_{i}\left(  \Sigma_{j}\right)  $. Anagously we can define $R_{i}\left(
\Sigma\right)  $, the \textit{normal injectivity radius of }$\Sigma$. It is a
remarkable fact (see \cite{Fuj} and \cite{HK}) that the existence of a unifom
lower bound for $R_{i}\left(  \Sigma\right)  $ ensures the existence of a
sequence of points $p_{i_{k}}\in M$ such that the sequence $\left(  M_{i_{k}%
},p_{i_{k}}\right)  $ converges in the Hausdorff-Gromov sense to a pointed
hyperbolic cone-manifold $\left(  M_{\infty},p_{\infty}\right)  $ with
topological type $\left(  M,\Sigma\right)  $. Moreover, $M_{\infty}$ must be
compact provides that cone angles of $M_{i_{k}}$ are uniformly bounded from below.

Let us also enphasize that the sequence $Vol\left(  M_{i}\right)  $ consisting
of the riemannian volumes of the hyperbolic manifolds $M_{i}-\Sigma$ is always
uniformly bounded. More precisely, we have (see \cite{Dun} and \cite{Fra})%
\begin{equation}
Vol\left(  M_{i}\right)  <Vol\left(  M_{0}\right)  \text{,}%
\label{cota superior volume}%
\end{equation}
where $M_{0}$ denotes the complete hyperbolic manifold that is homeomorphic to
$M-\Sigma$.

The purpose of this section is to prove the Theorem (\ref{teo principal}). It
is divided into two parts. The first part contains some premilinary results
whereas the remaining part deal with the proof of Theorem (\ref{teo principal}).

Let us point out that, throughout the rest of the paper, the term "component"
is going to stand for "connected component"

\subsection{Preliminary results}

Let us recall some definitons and elementary results which will be important
for the proof of Theorem (\ref{teo principal}). We will begin with the
classification of two dimensional embedded torus in $M-\Sigma$ (see
\cite{Bar2}).

\begin{lemma}
\label{Classificacao de Toros}Suppose that $M-\Sigma$ is hyperbolic and let
$T$ be a two dimensional torus embedded in $M-\Sigma$. Then $T$ separates $M
$. Moreover, one and only one of the following statements holds:

\begin{enumerate}
\item[$i.$] $T$ is parallel to a component of $\Sigma$ (hence it bounds a
solid torus in $M$),

\item[$ii.$] $T$ is not parallel to a component of $\Sigma$ and it bounds a
solid torus in $M-\Sigma$,

\item[$iii.$] $T$ is not parallel to a component of $\Sigma$ and it is
contained in a ball $B$ of $M-\Sigma$. Furthermore, $T$ bounds a region in $B$
which is homeomorphic to the exterior of a knot in $S^{3}$.
\end{enumerate}
\end{lemma}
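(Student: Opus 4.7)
The plan is to split into cases based on whether $T$ is compressible in $M-\Sigma$ and to exploit the fact that $M-\Sigma$ is hyperbolic (hence irreducible and atoroidal except for cusps). Suppose first that $T$ is incompressible in $M-\Sigma$. Then $\pi_1(T)\cong\mathbb{Z}^2$ injects into $\pi_1(M-\Sigma)$, and in a finite-volume hyperbolic $3$-manifold every $\mathbb{Z}^2$ subgroup is parabolic; it is therefore conjugate into the peripheral subgroup of some cusp, which means $T$ is isotopic to a cusp cross-section, i.e., parallel to some $\Sigma_j$. This is case $(i)$, and the solid torus in $M$ cobounded by $T$ and $\Sigma_j$ certifies that $T$ separates $M$.

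Now suppose $T$ is compressible in $M-\Sigma$, and choose a compressing disk $D\subset M-\Sigma$. Let $V$ be a regular neighborhood of $T\cup D$; a direct check (or the observation that $T\cup D\simeq S^2\vee S^1$) shows $V$ is homeomorphic to a solid torus with an open $3$-ball removed, and $\partial V=T'\sqcup S$, where $T'$ is parallel to $T$ and $S$ is the $2$-sphere obtained from $T$ by surgery along $\partial D$. Because $M-\Sigma$ is aspherical (hence irreducible), $S$ bounds a $3$-ball $B$ in $M-\Sigma$. There are two sub-cases according to which side of $S$ the ball $B$ lies on. If $V$ and $B$ lie on opposite sides of $S$, then gluing them along $S$ fills the missing ball of $V$, and $V\cup B$ is a solid torus in $M-\Sigma$ with boundary $T$; this is case $(ii)$. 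If instead $V\subset B$, then $T\subset B$ sits inside a $3$-ball of $M-\Sigma$, and the complement $B-V$ is a compact $3$-manifold with torus boundary $T$. Reinterpreting the construction, $B-V$ is homeomorphic to $B-N(\alpha)$, where $\alpha\subset B$ is the properly embedded arc (the \emph{tube core} reversing the compression) whose regular neighborhood is the tube that one attaches to $S$, after removing two disks, in order to recover $T$. Closing $\alpha$ by a trivial arc in the complementary $3$-ball $S^3-B$ produces a knot $K$ in $S^3$, and a standard tangle-complement argument identifies $B-N(\alpha)$ with the exterior $S^3-N(K)$; this is case $(iii)$. In both sub-cases $T$ is compressible, so by the previous paragraph it cannot be parallel to $\Sigma$.

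Finally, separation of $M$ is automatic: in each of the three cases $T$ bounds a compact $3$-submanifold of $M$ (a solid torus containing $\Sigma_j$, a solid torus in $M-\Sigma$, or a knot exterior inside $B$, respectively), so $[T]=0$ in $H_2(M;\mathbb{Z}/2)$ and $T$ separates. Mutual exclusivity follows from the compressibility dichotomy (separating case $(i)$ from the other two) and the position of the tube core (separating $(ii)$ from $(iii)$), with the convention that case $(ii)$ is recorded when the knot in $(iii)$ happens to be unknotted. The main technical hurdle is the bookkeeping in the compressible case: correctly identifying $V$ as a solid torus with an open ball removed, distinguishing the two positions of $B$ relative to $S$, and recognizing $B-N(\alpha)$ as a knot exterior via the tangle closure.
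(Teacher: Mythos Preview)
The paper does not prove this lemma here; it is quoted from \cite{Bar2} as a preliminary result, so there is no in-paper argument to compare against. Your proof is correct and is exactly the standard one: atoroidality of the finite-volume hyperbolic manifold $M-\Sigma$ forces any incompressible torus to be peripheral (case~(i)), while for a compressible torus the compression produces a sphere which, by irreducibility, bounds a ball, and the two possible positions of that ball relative to the compression region give cases~(ii) and~(iii). Two minor remarks. First, the step ``$\pi_{1}(T)$ is conjugate into a cusp subgroup $\Rightarrow$ $T$ is parallel to a cusp torus'' is the passage from algebraic to geometric atoroidality; it is valid here because the compact core of $M-\Sigma$ is Haken, but you might simply invoke geometric atoroidality of finite-volume hyperbolic $3$-manifolds directly. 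Second, your convention that the unknotted case of~(iii) is absorbed into~(ii) is the right way to secure the ``one and only one'' clause, since as literally stated cases~(ii) and~(iii) overlap when the knot exterior is a solid torus.
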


Now let us recall the geometric classification of the thin part of a
hyperbolic manifold.

\begin{definition}
Consider $\delta>0$ and let $\mathcal{M}$ be a hyperbolic manifold of
dimension $3$ (without boundary and perhaps noncomplete). Define
$\mathcal{M}_{thin}\left(  \delta\right)  $, the $\delta$-thin part of
$\mathcal{M}$, by%
\[
\mathcal{M}_{thin}\left(  \delta\right)  =\left\{  q\in M\;;\;r_{inj}%
^{M}\left(  q\right)  <\delta\;\;et\;\;\exp_{q}\;\text{is defined on }%
B_{T_{q}M}\left(  0,3\delta\right)  \right\}  \text{.}%
\]

\end{definition}

The following result concerning the thin part of hyperbolic manifolds will be
needed later.

\begin{proposition}
\label{classif da parte fina}Let $\mathcal{M}$ be a hyperbolic manifold of
dimension $3$ (without boundary and perhaps noncomplete) of finite volume. If
$\delta>0$ is small enough, then each component of $\mathcal{M}_{thin}\left(
\delta\right)  $ contains a maximal region which is isometric to one of the
following models:

\begin{enumerate}
\item[$i.$] the quotient of a metric neighborhood of a geodesic $\gamma$ in
$\mathbb{H}^{3}$ by a loxodromic element of $PSL_{2}\left(  \mathbb{C}\right)
$ leaving $\gamma$ invariant and whose translation length is not bigger than
$\delta$,

\item[$ii.$] a parabolic cusp of rank $2$.
\end{enumerate}
\end{proposition}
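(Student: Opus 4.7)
The proposition is the standard thick--thin (Margulis) decomposition, adapted to a possibly non-complete hyperbolic $3$-manifold of finite volume. My plan is to choose $\delta$ small enough that the $3$-dimensional Margulis lemma applies at every point of $\mathcal{M}_{thin}(\delta)$, classify the resulting virtually nilpotent subgroup of the holonomy algebraically, and translate that classification back into the two geometric model regions.

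Fix $\delta$ smaller than half the Margulis constant in dimension $3$. For $q\in\mathcal{M}_{thin}(\delta)$, the hypothesis that $\exp_{q}$ is defined on $B_{T_{q}\mathcal{M}}(0,3\delta)$ ensures that a ball of radius $2\delta$ around a preimage $\widetilde{q}\in\mathbb{H}^{3}$ embeds via the developing map, so homotopy classes of loops at $q$ of length less than $2\delta$ correspond bijectively to the elements of the holonomy translating $\widetilde{q}$ by less than $2\delta$. Let $\Gamma_{q}$ be the subgroup of $\mathrm{Isom}(\mathbb{H}^{3})$ they generate. By the Margulis lemma $\Gamma_{q}$ is virtually nilpotent; since $\mathcal{M}$ is a manifold it is torsion-free; and a torsion-free discrete virtually nilpotent subgroup of $PSL_{2}(\mathbb{C})$ is abelian. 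Such a non-trivial subgroup is then either infinite cyclic, generated by a loxodromic element of translation length at most $2\delta$, or generated by parabolics sharing a common fixed point on $\partial\mathbb{H}^{3}$ (free abelian of rank $1$ or $2$). In the loxodromic case the Margulis region --- the set of points of $\mathbb{H}^{3}$ moved by less than $2\delta$ by some non-trivial element of $\Gamma_{q}$ --- is an open tubular neighborhood of the invariant axis, and its quotient is model (i); in the parabolic case this region is a horoball at the common fixed point, whose quotient is a cusp of the same rank.

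The main obstacle, and the only point where the finite volume hypothesis really intervenes, is ruling out rank-$1$ cusps. The quotient of a horoball by a single parabolic has infinite hyperbolic volume, so the presence of a rank-$1$ cusp would force $Vol(\mathcal{M})=\infty$, contradicting the hypothesis; this leaves only cases (i) and (ii). To assemble the local picture into the statement I would check that the algebraic type of $\Gamma_{q}$ is locally constant on the component of $\mathcal{M}_{thin}(\delta)$ containing $q$, so that the quotient of the associated Margulis region is globally well defined on that component and is the claimed maximal isometric model. Shrinking $\delta$ once more, if necessary, prevents distinct model regions from interacting inside a single component.
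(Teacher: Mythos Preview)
Your overall strategy---Margulis lemma, algebraic classification of the elementary holonomy subgroup, then translation into the two geometric models---is the same as the paper's. The difference lies in how the passage from local to global is handled, and that is precisely where the non-completeness of $\mathcal{M}$ bites.

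The paper organises this step around the \emph{Margulis foliation} of a thin component $\mathcal{P}$: one pulls back to $\pi^{-1}(\mathcal{P})\subset\widetilde{\mathcal{M}}$ the canonical foliation of $\mathbb{H}^{3}$ attached to the elementary stabiliser (concentric tubes about the axis in the loxodromic case, horospheres in the parabolic case), and pushes it down equivariantly to $\mathcal{P}$. The crux is then to show that each leaf is a torus; since leaves are intrinsically flat, by Gauss--Bonnet it suffices to show they are closed, and the paper obtains this from the specific fact (referencing \cite{Bar1}) that the injectivity radius is \emph{constant along each leaf}. Combined with the $3\delta$-condition in the definition of $\mathcal{M}_{thin}(\delta)$, this forces the whole leaf to remain in the thin part and hence to be complete. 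This single observation simultaneously rules out rank-$1$ parabolic pieces and guarantees that the full model region actually sits inside $\mathcal{M}$.

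Your final paragraph is where this argument belongs, but what you propose there is weaker. Checking that the algebraic type of $\Gamma_{q}$ is locally constant does not, on its own, show that the quotient of the Margulis region in $\mathbb{H}^{3}$ embeds isometrically into the possibly non-complete $\mathcal{M}$; a priori the developing map might only cover part of it. Your volume argument against rank~$1$ cusps has the same circularity: it presupposes that a full horoball quotient lies inside $\mathcal{M}$, which is exactly the point at issue in the non-complete setting. The constancy of the injectivity radius on the leaves of the Margulis foliation is the missing ingredient that closes this gap.
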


This proposition is a consequence of the existenceof a Margulis foliation for
the thin part of a hyperbolic manifold. A proof for this proposition is given
in \cite[theorem 5.3]{BLP} where the authors study the thin part of hyperbolic
cone-manifolds with topological type $\left(  M,\Sigma\right)  $ and whose
cone angles are not bigger than $\pi$. Note that the condition imposed on the
cone angles is used only in the description of the singular components of the
thin part. We summarize bellow their proof for the proposition above which,
indeed, dispenses with the angle condition.

Consider a hyperbolic manifold $\mathcal{M}$ and denote by $\pi:\widetilde
{\mathcal{M}}\rightarrow\mathcal{M}$ the universal cover of $\mathcal{M}$. Let
$\delta>0$ be the constant given by the Margulis lemma (see \cite[KM]{KM},
\cite{BGS} et \cite{BLP}). Then for every component $\mathcal{P}$ of
$\mathcal{M}_{thin}\left(  \delta\right)  $, the stabilizer of a component of
$\pi^{-1}\left(  \mathcal{P}\right)  \subset\widetilde{\mathcal{M}}$ is an
elementary subgroup of $PSL_{2}\left(  \mathbb{C}\right)  $ constituted
exclusively either by parabolic or by loxodromic elements. Associated to this
group we have a canonical foliation of $\mathbb{H}^{3}$. The pull-back of this
foliation by a developing map gives a foliation on $\pi^{-1}\left(
\mathcal{P}\right)  $ which is equivariant by the action of $\pi
_{1}\mathcal{M}$. The quotient of this foliation is the Margulis foliation on
$\mathcal{P} $.

To finish the proof, it is sufficient to show that the leaves of this
foliation are two-dimensional torus. Because they are flat, it suffices
(Gauss-Bonnet) to verify that the leaves are complete. This however, is a
consequence of the fact that injectivity radius is constant on the leaves (see
\cite{Bar1}).

\subsection{Proof of the Theorem \ref{teo principal introducao}}

The purpose of this section is to study a non-collapsing sequence $M_{i}$.
Without loss of generality, this hypothesis implies the existence of a
sequence $p_{i}\in M-\Sigma$ satisfying%
\[
r_{0}=\inf\left\{  r_{inj}^{M_{i}}\left(  p_{i}\right)  \;;\;i\in
\mathbb{N}\right\}  >0\text{ ,}%
\]
and such that the sequence $\left(  M_{i},p_{i}\right)  $ converges in the
Hausdorff-Gromov sense to a pointed Alexandrov space $\left(  Z,z_{0}\right)
$. By definition of the Hausdorff-Gromov convergence, the ball $B_{Z}\left(
z_{0},r_{0}\right)  $ is isometric to a ball of radius $r_{0}$ in
$\mathbb{H}^{3}$ and this implies that $Z$ has dimension equal to $3$.

We are interested in the case where the length of the singularity remains
uniformly bounded, i.e. where%

\[
\sup\left\{  \mathcal{L}_{M_{i}}\left(  \Sigma_{j}\right)  \;;\;i\in
\mathbb{N}\;\text{,}\;j\in\left\{  1,\ldots,l\right\}  \right\}  <\infty\text{
.}%
\]
Since $Z$ has dimension 3, this assumption implies by an Ascoli-type argument
(passing to a subsequence if necessary) that each component $\Sigma_{j}$ of
$\Sigma$ satisfies one, and only one, of the following statements:

\begin{enumerate}
\item sup$\left\{  d_{M_{i}}\left(  p_{i},\Sigma_{j}\right)  \;;\;i\in
\mathbb{N}\right\}  <\infty$ and $\Sigma_{j}$ converges in the
Hausdorff-Gromov sense to a quasi-geodesic $\Sigma_{j}^{Z}\subset Z$,

\item $\lim\limits_{i\in\mathbb{N}}d_{M_{i}}\left(  p_{i},\Sigma_{j}\right)
=\infty$.
\end{enumerate}

\noindent This dichotomy allows us to write $\Sigma=\Sigma_{0}\sqcup
\Sigma_{\infty}$, where $\Sigma_{0}$ contains the components $\Sigma_{j}$ of
$\Sigma$ which satisfy Item $(1)$ and $\Sigma_{\infty}$ those that satisfy
Item $\left(  2\right)  $.

The following lemma shows that the hypothesis of non-collapsing imposes
restrictions on the length and on the cone angles of the singular components
of $\Sigma$ contained in $\Sigma_{0}$.

\begin{lemma}
\label{lim dim 3 implica controle de comp e ang}Suppose that the sequence
$M_{i}$ does not collapse and let $p_{i}\in M-\Sigma$ be a sequence of points
such that $r_{0}=\inf\left\{  r_{inj}^{M_{i}}\left(  p_{i}\right)
\;;\;i\in\mathbb{N}\right\}  >0$. If%
\[
L=\sup\left\{  \mathcal{L}_{M_{i}}\left(  \Sigma_{j}\right)  \;;\;i\in
\mathbb{N}\;\text{,}\;j\in\left\{  1,\ldots,l\right\}  \right\}  <\infty\text{
,}%
\]
then the following inequalities holds:

\begin{enumerate}
\item[$i.$] $\inf\left\{  \mathcal{L}_{M_{i}}\left(  \Sigma_{j}\right)
\;;\;i\in\mathbb{N}\;\text{,}\;\Sigma_{j}\subset\Sigma_{0}\right\}  >0$,

\item[$ii.$] $\inf\left\{  \alpha_{ij}\;;\;i\in\mathbb{N}\;\text{,}%
\;\Sigma_{j}\subset\Sigma_{0}\right\}  >0$,

\item[$iii.$] $\sup\left\{  R_{i}\left(  \Sigma_{j}\right)  \;;\;i\in
\mathbb{N}\;and\;\Sigma_{j}\subset\Sigma_{0}\right\}  <\infty$.
\end{enumerate}
\end{lemma}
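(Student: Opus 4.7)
The plan is to exploit the explicit tube geometry around each component $\Sigma_j\subset\Sigma_0$ together with the injectivity-radius bound at $p_i$. The three ingredients I shall use are: (a) the embedded tube $B_{M_i}(\Sigma_j,R_i(\Sigma_j))$ has volume $\tfrac{1}{2}\alpha_{ij}\mathcal{L}_{M_i}(\Sigma_j)\sinh^{2}R_i(\Sigma_j)$, bounded above by $Vol(M_0)$ via (\ref{cota superior volume}); (b) the meridian through a point at perpendicular distance $d$ from $\Sigma_j$ is a non-contractible loop in $M_i-\Sigma$ of length $\alpha_{ij}\sinh d$, its holonomy being the rotation by $\alpha_{ij}\not\equiv 0\pmod{2\pi}$; (c) in a non-complete hyperbolic manifold, the Riemannian injectivity radius at a point is bounded above both by the distance to the metric-completion boundary and by half the displacement of any non-trivial deck transformation acting on a lift of the point.

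First I localise $p_i$: applying (c) with the incompleteness of $M_i$ along $\Sigma$ yields $d_{M_i}(p_i,\Sigma_j)\geq r_0$, and the definition of $\Sigma_0$ furnishes a constant $D$ with $d_{M_i}(p_i,\Sigma_j)\leq D$ for every $\Sigma_j\subset\Sigma_0$. Assume first that $p_i$ lies inside the embedded tube $B_{M_i}(\Sigma_j,R_i(\Sigma_j))$ (the opposite scenario, where $R_i(\Sigma_j)<d_{M_i}(p_i,\Sigma_j)\leq D$, is handled with a minor adaptation based at a point of $\partial B_{M_i}(\Sigma_j,R_i(\Sigma_j))$). Fact~(b) then produces a non-contractible loop of length at most $\alpha_{ij}\sinh D$ through $p_i$; combining with (c) gives $r_0\leq\tfrac{1}{2}\alpha_{ij}\sinh D$, whence $\alpha_{ij}\geq 2r_0/\sinh D$, establishing~(ii).

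To prove (i), I reason by contradiction: if $\mathcal{L}_{M_i}(\Sigma_j)\to 0$ for some $\Sigma_j\subset\Sigma_0$, then $\mathrm{diam}(\Sigma_j)\leq\mathcal{L}_{M_i}(\Sigma_j)/2\to 0$, so $\Sigma_j$ converges in the Hausdorff--Gromov sense to a single point $z^{*}\in Z$ with $d_{Z}(z_0,z^{*})\leq D$. Picking $q_i\in\Sigma_j$ and $r$ small enough that $r<R_i(\Sigma_j)$ for $i$ large, fact~(a) applied to $B_{M_i}(q_i,r)\subseteq B_{M_i}(\Sigma_j,r)$ gives $Vol\bigl(B_{M_i}(q_i,r)\bigr)\leq\tfrac{1}{2}\alpha_{ij}\mathcal{L}_{M_i}(\Sigma_j)\sinh^{2}r\to 0$. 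Since $q_i\to z^{*}$ and $(Z,z_0)$ has dimension three (and is connected, hence everywhere three-dimensional), volume continuity of non-collapsing Hausdorff--Gromov convergence of three-dimensional Alexandrov spaces yields $Vol_{Z}\bigl(B_{Z}(z^{*},r)\bigr)=0$ for a.e.\ small $r$, contradicting the positivity of three-dimensional Hausdorff measure on every open subset of $Z$. Finally (iii) follows directly from (i), (ii) and fact~(a): the inequality $\tfrac{1}{2}\alpha_{ij}\mathcal{L}_{M_i}(\Sigma_j)\sinh^{2}R_i(\Sigma_j)\leq Vol(M_0)$ combined with the uniform lower bounds on $\alpha_{ij}$ and $\mathcal{L}_{M_i}(\Sigma_j)$ yields a uniform upper bound on $\sinh R_i(\Sigma_j)$.

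The principal obstacle is the volume argument of Step~(i): to invoke the tube-volume formula, the radius $r$ must be chosen smaller than the normal injectivity radius $R_i(\Sigma_j)$. If $R_i(\Sigma_j)$ collapsed to zero simultaneously with $\mathcal{L}_{M_i}(\Sigma_j)$ because $\Sigma_j$ crashes into another component of $\Sigma$ or into itself, fact~(a) would break down. Ruling this out requires a Margulis-type tube-radius estimate for singular geodesics in hyperbolic cone-manifolds (cf.\ \cite{HK},\cite{BLP}) forcing $R_i(\Sigma_j)$ to blow up as $\mathcal{L}_{M_i}(\Sigma_j)\to 0$, or a direct geometric argument showing that such a simultaneous collapse would already contradict $r_{inj}^{M_i-\Sigma}(p_i)\geq r_0$.
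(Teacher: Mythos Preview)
Your approach has two genuine gaps, both of which the paper's argument avoids with a single idea.

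For (ii), your meridian-loop argument is only valid when $p_i$ lies inside the embedded tube $B_{M_i}(\Sigma_j,R_i(\Sigma_j))$. The ``minor adaptation based at a point of $\partial B_{M_i}(\Sigma_j,R_i(\Sigma_j))$'' does not work: such a boundary point carries no a~priori lower bound on its injectivity radius, so the meridian through it gives you no inequality. Concatenating a geodesic from $p_i$ to the boundary with the meridian and back yields a loop whose length is at least $2\bigl(d_{M_i}(p_i,\Sigma_j)-R_i(\Sigma_j)\bigr)$, which need not be small even if $\alpha_{ij}\to 0$. For (i), you yourself flag the obstacle: your volume-collapse argument needs a radius $r<R_i(\Sigma_j)$, and nothing you have written rules out $R_i(\Sigma_j)\to 0$.

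The paper bypasses both difficulties by \emph{not} working inside the embedded tube at all. Instead it develops the whole cone-manifold $M_i$ into a slab $\mathcal{A}\subset\mathbb{H}^3(\alpha_{ij})$ of height $\mathcal{L}_{M_i}(\Sigma_j)$ via the minimizing geodesics orthogonal to $\Sigma_j$; this normal-exponential development is volume-preserving regardless of the size of $R_i(\Sigma_j)$. Since $B_{M_i}(p_i,r_0)\subset B_{M_i}(\Sigma_j,\mathcal{R})$ with $\mathcal{R}=D+r_0$, its image sits inside $B_{\mathbb{H}^3(\alpha_{ij})}(\sigma,\mathcal{R})\cap\mathcal{A}$, whence
\[
V_0 \;\le\; \tfrac{1}{2}\,\alpha_{ij}\,\mathcal{L}_{M_i}(\Sigma_j)\,\sinh^2\mathcal{R}.
\]
From this single inequality one reads off $\mathcal{L}_{M_i}(\Sigma_j)\ge V_0/(\pi\sinh^2\mathcal{R})$ (using $\alpha_{ij}\le 2\pi$) and $\alpha_{ij}\ge 2V_0/(L\sinh^2\mathcal{R})$ (using $\mathcal{L}_{M_i}(\Sigma_j)\le L$), proving (i) and (ii) simultaneously. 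Your derivation of (iii) from (i), (ii) and the global volume bound is fine and matches the paper.
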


\begin{proof}
Consider $\mathcal{R}>\sup\left\{  d_{M_{i}}\left(  p_{i},\Sigma_{j}\right)
\;;\;i\in\mathbb{N}\;\text{,}\;\Sigma_{j}\subset\Sigma_{0}\right\}  +r_{0} $.
Note that, by construction, $\mathcal{R}<\infty$ and $B_{M_{i}}\left(
p_{i},r_{0}\right)  \subset B_{M_{i}}\left(  \Sigma_{j},\mathcal{R}\right)  $,
for all $i\in\mathbb{N}\;$and$\;$all component $\Sigma_{j}$ of $\Sigma_{0} $.

Fix $i\in\mathbb{N}\;$and$\;$fix a component $\Sigma_{j}$ of $\Sigma_{0}$. Let
$\mathcal{A}$ be a region of $\mathbb{H}^{3}\left(  \alpha_{ij}\right)  $
which is bounded by two planes orthogonal to the singular geodesic $\sigma$ of
$\mathbb{H}^{3}\left(  \alpha_{ij}\right)  $ and having distance
$\mathcal{L}_{M_{i}}\left(  \Sigma_{j}\right)  $ between them. Using a
developing map for $M_{i}-\Sigma$ and the minimizing geodesics leaving
$\Sigma_{j}$ orthogonally, the manifold $M_{i}$ can be developed in a compact
domain $K\subset\mathcal{A}$ such that $Vol\left(  K\right)  =Vol\left(
M_{i}\right)  $.

Since $B_{M_{i}}\left(  p_{i},r_{0}\right)  \subset B_{M_{i}}\left(
\Sigma_{j},\mathcal{R}\right)  $, the development of $B_{M_{i}}\left(
p_{i},r_{0}\right)  $ in $K$ is contained in $B_{\mathbb{H}^{3}\left(
\alpha_{ij}\right)  }\left(  \sigma,\mathcal{R}\right)  \cap\mathcal{A}$. If
$V_{0}$ represents the volume of a ball of radius $r_{0}$ in $\mathbb{H}^{3}$,
we have%
\[
V_{0}=Vol\left(  B_{M_{i}}\left(  p_{i},r_{0}\right)  \right)  \leq Vol\left(
B_{\mathbb{H}^{3}\left(  \alpha_{ij}\right)  }\left(  \sigma,\mathcal{R}%
\right)  \cap\mathcal{A}\right)  =\frac{\alpha_{ij}}{2}\mathcal{L}_{M_{i}%
}\left(  \Sigma_{j}\right)  \sinh^{2}\left(  \mathcal{R}\right)
\]
and therefore%
\[
\mathcal{L}_{M_{i}}\left(  \Sigma_{j}\right)  \geq\frac{V_{0}}{\pi.\sinh
^{2}\left(  \mathcal{R}\right)  }>0\qquad\text{and}\qquad\alpha_{ij}\geq
\frac{2V_{0}}{L.\sinh^{2}\left(  \mathcal{R}\right)  }>0\text{ .}%
\]

Finally, item (iii) follows from the fact that the sequence $Vol\left(
M_{i}\right)  $ is uniformly bounded from above (see
\ref{cota superior volume}).\hfill
\end{proof}

\bigskip

With the preceding notations, set%
\[
\Sigma_{Z}=%
%TCIMACRO{\tbigcup \limits_{\Sigma_{j}\subset\Sigma_{0}}}%
%BeginExpansion
{\textstyle\bigcup\limits_{\Sigma_{j}\subset\Sigma_{0}}}
%EndExpansion
\Sigma_{j}^{Z}\subset Z\text{ .}%
\]
We present now the main result for the non-collapsing:

\begin{theorem}
[non-collapsing]\label{wandorema de noneffodrement}Suppose that there exists a
sequence $p_{i}\in M-\Sigma$ satisfying%
\[
r_{0}=\inf\left\{  r_{inj}^{M_{i}}\left(  p_{i}\right)  \;;\;i\in
\mathbb{N}\right\}  >0
\]
and such that the sequence $\left(  M_{i},p_{i}\right)  $ converges in the
Hausdorff-Gromov sense to a pointed Alexandrov space $\left(  Z,z_{0}\right)
$ of dimension $3$. If%
\[
\sup\left\{  \mathcal{L}_{M_{i}}\left(  \Sigma_{j}\right)  \;;\;i\in
\mathbb{N}\;\text{,}\;j\in\left\{  1,\ldots,l\right\}  \right\}  <\infty\text{
,}%
\]
then the following assertions hold:

\begin{enumerate}
\item[$i.$] $Z-\Sigma_{Z}$ is a hyperbolic manifold of finite volume whose
convex and unbounded ends are finite in number and are parabolic cusps of rank
$2$,

\item[$ii.$] $Z$ is compact (and therefore homeomorphic to $M$) if and only if
$\Sigma_{\infty}=\emptyset$,

\item[$iii.$] if $Z$ is not compact, there is a bijection between the
connected components of $\Sigma_{\infty}$ and the complete ends of
$Z-\Sigma_{Z}$. In fact, each unbounded end $C_{j}$ of $Z-\Sigma_{Z}$ is the
Hausdorff-Gromov limit of metric neighborhoods (homeomorphic to solid tori)
$B_{M_{i}}\left(  \Sigma_{j},r_{i}\right)  $ of a component $\Sigma_{j} $ of
$\Sigma_{\infty}$, where $r_{i}>0$ is an increasing sequence going off to
infinity. In addition, the cone angles $\alpha_{ij}$ and the lengths of these
components converge to $0$.
\end{enumerate}
\end{theorem}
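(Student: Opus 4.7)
The plan is to construct a hyperbolic structure on $Z-\Sigma_{Z}$ from the Hausdorff--Gromov convergence away from $\Sigma_{Z}$, combine Proposition \ref{classif da parte fina} with the volume bound (\ref{cota superior volume}) to describe its ends, and then apply Lemma \ref{Classificacao de Toros} to horospherical tori to match the unbounded ends with components of $\Sigma_{\infty}$.

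For item (i), every $z\in Z-\Sigma_{Z}$ lies at positive distance $\rho$ from $\Sigma_{Z}$, and for large $i$ the Hausdorff--Gromov approximation yields $z_{i}\in M_{i}-\Sigma$ with $d_{M_{i}}(z_{i},\Sigma)>\rho/2$, so that $B_{M_{i}}(z_{i},\rho/4)$ is a genuine hyperbolic ball whose Gromov--Hausdorff limit is isometric to a hyperbolic ball around $z$. Piecing these charts together equips $Z-\Sigma_{Z}$ with a (noncomplete) hyperbolic metric. Lower semicontinuity of volume under Hausdorff--Gromov convergence in fixed dimension $3$, together with $Vol(M_{i})<Vol(M_{0})$ from (\ref{cota superior volume}), yields $Vol(Z-\Sigma_{Z})<\infty$. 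Proposition \ref{classif da parte fina} then describes the $\delta$-thin part of $Z-\Sigma_{Z}$ as a disjoint union of Margulis tubes and rank $2$ parabolic cusps; Margulis tubes being compact, every convex unbounded end must be a cusp, and finite volume allows only finitely many of them.

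For items (ii) and (iii), I would fix an unbounded end of $Z-\Sigma_{Z}$, pick a horospherical torus $T$ of very small area deep inside it, and pull it back through the Hausdorff--Gromov maps to an embedded torus $T_{i}\subset M_{i}-\Sigma$. Lemma \ref{Classificacao de Toros} provides three cases. In case (i), $T_{i}$ is parallel to some $\Sigma_{j}$ and the solid torus on the cusp side contains $\Sigma_{j}$; since the cusp extends arbitrarily far from $z_{0}$, while Lemma \ref{lim dim 3 implica controle de comp e ang}(iii) bounds the normal injectivity radii of components of $\Sigma_{0}$ from above, only components of $\Sigma_{\infty}$ are admissible, and we obtain a sequence $B_{M_{i}}(\Sigma_{j},r_{i})$ of solid-torus neighborhoods with $r_{i}\to\infty$ converging to the cusp in the Hausdorff--Gromov sense. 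Cases (ii) and (iii) are to be excluded: in each of them $T_{i}$ bounds a topological region $R_{i}$ lying entirely inside $M_{i}-\Sigma$ with bounded, fixed topology, and either such $R_{i}$ carries a sequence of homotopically non-trivial closed curves shrinking to zero length (case (ii), the core of the solid torus) or it hides a non-trivial knot exterior incompatible with a rank $2$ parabolic limit (case (iii)), both possibilities contradicting the fact that the hyperbolic manifold $M-\Sigma$ has finite topological type with peripheral tori in bijection with the components of $\Sigma$. Item (ii) follows at once: $Z$ is compact exactly when no unbounded end exists, i.e. when $\Sigma_{\infty}=\emptyset$, and in that case the Hausdorff--Gromov maps glue to a homeomorphism $Z\cong M$.

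Finally, the vanishing of $\alpha_{ij}$ and $\mathcal{L}_{M_{i}}(\Sigma_{j})$ for $\Sigma_{j}\subset\Sigma_{\infty}$ in (iii) should follow from the volume formula $Vol(B_{M_{i}}(\Sigma_{j},r_{i}))=\tfrac{\alpha_{ij}}{2}\mathcal{L}_{M_{i}}(\Sigma_{j})\sinh^{2}(r_{i})$ used in the proof of Lemma \ref{lim dim 3 implica controle de comp e ang}: combined with $Vol(B_{M_{i}}(\Sigma_{j},r_{i}))\leq Vol(M_{0})$ and $r_{i}\to\infty$, this forces $\alpha_{ij}\mathcal{L}_{M_{i}}(\Sigma_{j})\sinh(r_{i})\to 0$, i.e. the area of the boundary torus tends to zero. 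Since the limit end is a rank $2$ parabolic cusp, both flat translation periods of the approximating cross-sectional torus (the longitude of length $\mathcal{L}_{M_{i}}(\Sigma_{j})$ and the meridian of length essentially $\alpha_{ij}\sinh(r_{i})$) must tend to zero separately, giving both $\mathcal{L}_{M_{i}}(\Sigma_{j})\to 0$ and $\alpha_{ij}\to 0$. The main obstacle in this plan is the elimination of cases (ii) and (iii) of Lemma \ref{Classificacao de Toros}, which requires promoting the qualitative picture of a limit parabolic cusp into quantitative topological constraints on the approximating cone-manifolds.
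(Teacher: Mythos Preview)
Your treatment of item (i) matches the paper's: hyperbolic charts on $Z-\Sigma_Z$ via Gromov--Hausdorff convergence, finite volume from (\ref{cota superior volume}), and Proposition \ref{classif da parte fina} for the ends. The real difficulty, as you correctly flag, is items (ii)--(iii), and here your sketch has a genuine gap.

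For case (iii) of Lemma \ref{Classificacao de Toros} (torus in a ball), the paper's argument is cleaner and different from yours: one tracks holonomy representations $\zeta_{ki}\circ(f_{ki})_\ast$ and shows they converge to the holonomy $\varphi_k$ of the cusp $C_k$; since $\varphi_k$ sends any nontrivial loop to a nontrivial parabolic, the image torus cannot be nullhomotopic in $M-\Sigma$ for large $i$. Your ``incompatible with a rank $2$ parabolic limit'' does not make this precise. More seriously, your elimination of case (ii) (torus bounding a solid torus $W_{ki}$ in $M-\Sigma$) does not work as written: the existence of short core curves in $W_{ki}$ is not a contradiction---short closed geodesics are perfectly compatible with $M-\Sigma$ having finite topological type. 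The paper's argument here is substantially deeper. First it shows (again via holonomy convergence) that the meridian slopes $\mu_{ki}$ of the $W_{ki}$ are pairwise distinct in $\pi_1 C_k$; then it proves (Lemma \ref{toros contem geod}) that each $W_{ki}$ contains a simple closed geodesic $\sigma_{ki}$, so one may regard $M_i$ as a cone-manifold singular along $\Sigma'=\Sigma\cup\bigcup\sigma_{ki}$ with cone angle $2\pi$ on the new components. Thurston's hyperbolic Dehn surgery on the complete structure of $M-\Sigma'$ along the varying slopes $(p_{ki},q_{ki})$ then produces a sequence of complete hyperbolic manifolds all diffeomorphic to $M-\Sigma$, whose volumes both stay strictly below $Vol(\mathcal{M}_0)$ and converge to $Vol(\mathcal{M}_0)$; Mostow rigidity forces the volumes to be constant, a contradiction. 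This Dehn-surgery/Mostow step is the missing idea in your plan.

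Finally, your argument that $\mathcal{L}_{M_i}(\Sigma_j)\to 0$ contains a geometric slip: on $\partial B_{M_i}(\Sigma_j,r_i)$ the longitude has length of order $\mathcal{L}_{M_i}(\Sigma_j)\cosh(r_i)$, not $\mathcal{L}_{M_i}(\Sigma_j)$, so it \emph{grows} rather than shrinks, and the ``both periods tend to zero'' claim is false. In the paper the vanishing of $\alpha_{ij}$ for $\Sigma_j\subset\Sigma_\infty$ is obtained by showing the set $\Omega$ of such components with non-vanishing cone-angle limit is empty (again via the meridian-instability plus Dehn-surgery mechanism), after which the length statement follows from the convergence to the complete structure on $M-\Sigma$.
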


\begin{proof}
[Proof of Item (i)]According to \cite[Lemma 2]{Fuj}, every point of
$Z-\Sigma_{Z}$ is the limit of a sequence of points of $M_{i}-\Sigma$ whose
injectivity radius is uniformly bounded from below. This implies that
$Z-\Sigma_{Z}$ is a (without boundary and noncomplete) hyperbolic manifold.
Note that the unbounded ends of $Z$ are those of $Z-\Sigma_{Z}$. In view of
Proposition (\ref{classif da parte fina}) (see also \cite[Theorem 5.3]{BLP}),
to prove item (i) it is sufficient to shows the following:\bigskip

\noindent\textbf{Claim:} $Vol\left(  Z-\Sigma_{Z}\right)  <\infty$.\bigskip

\noindent\textbf{Proof of Claim :} Suppose for contradiction the statement is
false. Let $K_{\infty}$ be a compact set of $Z-\Sigma_{Z}$ whose riemannian
volume is strictly greater than $Vol(M_{comp})$. Since the convergence is
bilipschitz on compact subsets \cite[Theorem 6.20]{CHK}, there exists an index
$i_{0}\in\mathbb{N}$ and a compact subset $K_{i_{0}}$ of $M_{i_{0}}-\Sigma$
(near $K_{\infty}$) such that%
\[
Vol\left(  M_{comp}\right)  <Vol_{M_{i_{0}}}\left(  K_{i_{0}}\right)  \leq
Vol\left(  M_{i_{0}}\right)  \text{.}%
\]
This is however impossible since $Vol\left(  M_{i_{0}}\right)  <Vol\left(
M_{comp}\right)  $ (see (\ref{cota superior volume})).\hfill$\diamond$

\hfill
\end{proof}

\bigskip

\begin{proof}
[Proof of Items (ii) and (iii)]If $Z$ is compact then $\Sigma_{\infty
}=\emptyset$. Suppose now that $Z$ is not compact. By Lemma
(\ref{lim dim 3 implica controle de comp e ang}) we can choose $R>0$ such that%
\[
B_{M_{i}}\left(  \Sigma_{j},R_{i}\left(  \Sigma_{j}\right)  \right)  \subset
B_{M_{i}}\left(  p_{i},R/2\right)
\]
for all connected component $\Sigma_{j}$ of $\Sigma_{0}$ and all $i\in N$. Let
$K$ be a compact subset of $Z$ which contains the ball $B_{Z}\left(
z_{0},R\right)  $ (and hence $\Sigma_{Z}$) in its interior and satisfies%
\[
\mathcal{Z}=Z-int\left(  K\right)  =C_{1}\sqcup\ldots\sqcup C_{m}\text{ ,}%
\]
where each $C_{k}\thickapprox$ $T^{2}\times\left[  0,\infty\right)  $ is a
cuspidal end of $Z$.

Consider a sequence $C_{1i}=T^{2}\times\left[  0,t_{i}\right]  $ of compact
subsets of $C_{1}$, where $t_{i}>0$ is an unbounded and strictly increasing sequence.

Let $\varepsilon_{i}>0$ be a sequence converging to zero. Without loss of
generality, there exists (according to \cite[Theorem 6.20]{CHK}) a sequence of
$\left(  1+\varepsilon_{i}\right)  $-bilipschitz embeddings $f_{1i}%
:C_{1i}\rightarrow M_{i}-\Sigma$ onto their images. Therefore, the sequence
$B_{1i}=f_{1i}\left(  C_{11}\right)  $ converges in the bilipschitz sense to
the compact set $C_{11}$.

Consider now a sequence of holonomy representations $\zeta_{1i}%
:\mathbb{Z\times\mathbb{Z}}\rightarrow PSL_{2}\left(  \mathbb{C}\right)  $ for
the hyperbolic structures on the interior sets $B_{1i}$. According to
\cite[Theorem 6.22]{CHK}, we can assume that%
\begin{equation}
\zeta_{1i}\circ\left(  f_{1i}\right)  _{\ast}\longrightarrow\varphi_{1}\text{
,}\label{conv homo}%
\end{equation}
where $\varphi_{1}:\mathbb{Z\times\mathbb{Z}}\rightarrow PSL_{2}\left(
\mathbb{C}\right)  $ is a holonomy representation of the hyperbolic structure
in the interior of $C_{1}$ and where $\left(  f_{1i}\right)  _{\ast
}:\mathbb{Z\times\mathbb{Z\rightarrow}}\mathbb{\pi}_{1}\left(  M-\Sigma
\right)  $ is the canonical homomorphism induced by the map $f_{1i}$.

Consider the torus $T_{1i}=f_{1i}\left(  T^{2}\times\left\{  0\right\}
\right)  $ embedded in $M-\Sigma$. Since $K$ contains the ball $B_{Z}\left(
z_{0},R\right)  $, the torus $T_{1i}$ cannot be parallel to a component
$\Sigma_{j}$ of $\Sigma_{0}$. For $i$ sufficiently large, the torus $T_{1i}$
cannot be contained in a ball of $M-\Sigma$. To see this, consider a
homotopically nontrivial loop $\gamma_{1}$ on $T^{2}\times\left\{  0\right\}
\subset C_{11}$. Since $C_{1}$ is a parabolic cusp, $\varphi_{1}\left(
\gamma_{1}\right)  $ is a nontrivial parabolic element of $PSL_{2}\left(
\mathbb{C}\right)  $ and therefore the convergence (\ref{conv homo}) implies
that $\zeta_{1i}\circ\left(  f_{1i}\right)  _{\ast}\left(  \gamma_{1}\right)
$ is not trivial for $i$ very large. The same then holds for the sequence
$\left(  f_{1i}\right)  _{\ast}\left(  \gamma_{1}\right)  $.

According to Lemma (\ref{Classificacao de Toros}), we can suppose that the
torus $T_{1i}$ bounds a solid torus $W_{1i}$ in $M$ (with perhaps a singular
soul). Note that%
\begin{equation}
\lim_{i\rightarrow\infty}diam_{M_{i}}\left(  W_{1i}\right)  =\infty
\text{,}\label{diam Wi explode}%
\end{equation}
because $f_{1i}\left(  C_{1i}\right)  \subset W_{1i}$, for all $i\in\mathbb{N}
$.

We can repeat the same construction for each cusp $C_{k}$ of $\mathcal{Z}$ in
order to obtain sequences of embedded tori $T_{ki}\subset M-\Sigma$
($k\in\left\{  1,\ldots,m\right\}  $ and $i\in\mathbb{N}$), each of then
boundy solid torus $W_{ki}$ in $M-\Sigma_{0}$. Furthermore whose diameters
become infinite with $i$. This yields a sequence of $3$-manifolds with torus
boundary%
\[
\mathcal{M}_{i}=M_{i}-%
%TCIMACRO{\tbigcup \limits_{k=1}^{m}}%
%BeginExpansion
{\textstyle\bigcup\limits_{k=1}^{m}}
%EndExpansion
W_{ki}%
\]
such that $M$ can be obtained by Dehn filling on their boundary components. By
construction, the sequence $\mathcal{M}_{i}$ converges in the Hausdorff-Gromov
sense to the compact $K$ and then (by Perelman's stability theorem
\cite{Kap}), we can assume that the manifolds $\mathcal{M}_{i}$ are all
homeomorphic to $K$.

For all $i\in\mathbb{N}$ and all $k\in\left\{  1,...,m\right\}  $, fix a
homotopically nontrivial loop $\mu_{ki}$ in $T^{2}\times\left\{  0\right\}
\subset C_{k}$ satisfying:

\begin{enumerate}
\item[$\bullet$] the loop $f_{ki}\circ\mu_{ki}$ bounds a disc in $W_{ki}$,

\item[$\bullet$] if, for some index $j\in\mathbb{N}$, a loop $\mu_{kj}$
belongs to the same homotopy class of the loop $\mu_{ki}$, then $\mu_{kj}%
=\mu_{ki}$.
\end{enumerate}

The rest of the proof is going to be divided in two cases depending on whether
or not $\Sigma_{0}$ is empty.\bigskip

\noindent\textbf{1}$^{\text{\textbf{st}}}$ \textbf{case : }$\Sigma
_{0}=\emptyset$\textit{.}\bigskip

Since the link $\Sigma$ was supposed to be non empty, it follows that
$\Sigma_{\infty}\neq\emptyset$. Since the distance between $p_{i}$ and
$\Sigma_{\infty}$ becomes infinite, we can assume that $\Sigma_{\infty}$ is
contained in the complement of $\mathcal{M}_{i}$. More precisely, we can also
assume (cf. Lemma \ref{Classificacao de Toros}) that each solid torus of
$M_{i}-\mathcal{M}_{i}$ contains at most one component of $\Sigma_{\infty}$
and, in the latter case, this component corresponds to the soul of the torus
in question.

The singular set $\Sigma_{\infty}$ has a finite number of elements. Passing to
a subsequence if necessary, we obtain an one-to-one map which associates each
component $\Sigma_{j}$ of $\Sigma_{\infty}$ to a component $C_{k_{j}}$ of
$\mathcal{Z}$, that is, the component $\Sigma_{j}$ is contained in the
component $W_{k_{j}i}$ of $M_{i}-\mathcal{M}_{i}$, for all $i\in\mathbb{N}$.

Recall that every connected component $\Sigma_{j}$ of $\Sigma_{\infty}$
satisfies $\lim\limits_{i\in\mathbb{N}}d_{M_{i}}\left(  p_{i},\Sigma
_{j}\right)  =\infty$. Since the tori $T_{k_{j}i}$ remains at a finite
distance to the points $p_{i}$ and they are parallel to the components
$\Sigma_{j}$, we must have $\lim\limits_{i\rightarrow\infty}R_{i}\left(
\Sigma_{j}\right)  =\infty$.

Since $\Sigma_{0}=\emptyset$ and thanks to \cite[Theorem 1]{Fuj}, we have that
the cone angles of $\Sigma$ converge to zero and $Z$ has a complete hyperbolic
structure whose ends are associated with components of $\Sigma_{\infty}$. In
other words, the injection defined above between the components of
$\Sigma_{\infty}$ and the components of $\mathcal{Z}$ is, in deed, a
bijection.\bigskip

\noindent\textbf{2}$^{\text{\textbf{nd}}}$\textbf{\ case : }$\Sigma_{0}%
\neq\emptyset$.\bigskip

Denote by $\Lambda$ the subset of $\left\{  1,\ldots,m\right\}  $ containing
the indices that are not associated with components of $\Sigma_{\infty}$.
Denote also by $\Omega$ the subset of $\left\{  1,\ldots,m\right\}  $
containing the indices that are associated with components of $\Sigma_{\infty
}$ whose sequence of cone angles does not converge to zero.

\begin{lemma}
\label{merid nao estabiliza}There exist $i_{0}\in\mathbb{N}$ satisfying: for
each $k\in\Lambda\cup\Omega$, the homotopy classes of loops $\mu_{ki}$
($i>i_{0}$) are pairwise distinct.
\end{lemma}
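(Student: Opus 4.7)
The plan is to argue by contradiction: suppose that for some $k \in \Lambda \cup \Omega$ the sequence of homotopy classes $[\mu_{ki}]$ takes a given class infinitely often. By the canonical-representative condition imposed on the $\mu_{ki}$, one can pass to a subsequence along which $\mu_{ki} = \mu_k$ is a fixed non-trivial element of $\pi_1(T^2\times\{0\})$. Once this infinite repetition is excluded (in the strong form $\mu_{ki}=\mu_k$), a diagonal extraction over $k \in \Lambda \cup \Omega$ delivers the index $i_0$ after which the homotopy classes are pairwise distinct.

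The key input is the holonomy convergence (\ref{conv homo}): evaluating at $\mu_k$ gives $\zeta_{ki}\circ(f_{ki})_*(\mu_k) \longrightarrow \varphi_k(\mu_k)$ in $PSL_2(\mathbb{C})$. Since $\varphi_k$ is the holonomy of a rank-$2$ parabolic cusp, and hence a faithful representation into a parabolic subgroup, $\varphi_k(\mu_k)$ is a \emph{non-trivial parabolic} element. For $k\in\Lambda$ the torus $W_{ki}$ carries no singularity, so $f_{ki}(\mu_k)$ bounds an embedded disc in $W_{ki}\subset M_i-\Sigma$ and its holonomy is the identity for every $i$; passing to the limit yields $\varphi_k(\mu_k)=I$, contradicting non-triviality. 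For $k\in\Omega$ the core $\Sigma_{j_k}\subset W_{ki}$ has cone angle $\alpha_{i,j_k}\geq\varepsilon>0$ (after a further subsequence), so the meridian holonomy is conjugate to the rotation $R(\alpha_{i,j_k})$, an elliptic element of trace-squared $4\cos^2(\alpha_{i,j_k}/2)$; continuity of trace-squared then forces $\cos^2(\alpha_{i,j_k}/2)\to 1$, giving $\alpha_{i,j_k}\to 0$ (excluded by $\Omega$) or $\alpha_{i,j_k}\to 2\pi$.

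To dispose of the remaining subcase $\alpha_{i,j_k}\to 2\pi$, I would apply the same trace argument to the longitude $\lambda_k$: its holonomy in $M_i$ is the loxodromic translation $T(\ell_{i,j_k})$ of trace-squared $4\cosh^2(\ell_{i,j_k}/2)\to 4$, forcing $\ell_{i,j_k}\to 0$. The pair $\alpha\to 2\pi$ and $\ell\to 0$ on the one-parameter family of cone structures on the fixed topological Dehn filling along $\mu_k$ then violates the continuity in the cone angle of the core length along this family (a consequence of Hodgson--Kerckhoff local rigidity together with Thurston's hyperbolic Dehn surgery): along that family the core length vanishes only at $\alpha=0$, being positive at $\alpha=2\pi$. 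I expect this last step to be the main obstacle, since the purely algebraic trace computation is inconclusive at $\alpha=2\pi$ and one must invoke the analytic structure of the deformation space of hyperbolic cone structures; the $\Lambda$ case and the $\alpha\to 0$ half of the $\Omega$ case are immediate consequences of the convergence of holonomies and the faithfulness of the cusp representation.
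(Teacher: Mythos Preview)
Your treatment of the $\Lambda$ case is correct and coincides with the paper's: since $f_{ki}\circ\mu_k$ bounds a disc in $W_{ki}\subset M_i-\Sigma$, its holonomy is trivial for every $i$, contradicting that the limit $\varphi_k(\mu_k)$ is a nontrivial parabolic.

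For the $\Omega$ case your approach diverges from the paper's, and the divergence matters. The paper does not compute traces at all. Instead it argues metrically: the fixed loop $\mu$ has a definite length in $T^2\times\{0\}$, and since $f_{k_0i}$ is $(1+\varepsilon_i)$-bilipschitz, the sequence of lengths $\mathcal{L}_{M_i}(f_{k_0i}\circ\mu)$ is bounded. On the other hand, because $\Sigma_j\subset\Sigma_\infty$ one has $d_{M_i}(p_i,\Sigma_j)\to\infty$ while $T_{k_0i}$ stays at bounded distance from $p_i$; in particular $R_i(\Sigma_j)\to\infty$. Any loop in $M_i-\Sigma$ freely homotopic to the meridian of $\Sigma_j$ and lying at distance $\geq R$ from $\Sigma_j$ has length at least $\alpha_{ij}\sinh R$. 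Since $\alpha_{\infty j}\neq 0$, this forces $\mathcal{L}_{M_i}(f_{k_0i}\circ\mu)\to\infty$, a contradiction. This single estimate handles every value of $\alpha_{\infty j}\in(0,2\pi]$ uniformly, with no case $\alpha\to 2\pi$ to worry about.

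Your trace route, by contrast, genuinely stalls at $\alpha_{ij}\to 2\pi$, and the fix you sketch does not close the gap. First, the longitude holonomy is not the pure translation $T(\ell_{i,j_k})$ but a screw motion with complex length $\pm(\ell_{ij}+i\tau_{ij})+ia_i\alpha_{ij}$, where $a_i\in\mathbb{Z}$ depends on $i$ through $(f_{ki})_\ast$; so the trace limit only yields $\ell_{ij}\to 0$ after this correction. More seriously, the final appeal to ``continuity in the cone angle of the core length along this family'' is not available here: the $M_i$ form a \emph{sequence} of cone-manifolds, not a continuous path in a Hodgson--Kerckhoff deformation space, and there is no a priori reason they lie on a single analytic branch. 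Thurston's hyperbolic Dehn surgery theorem concerns a neighbourhood of the complete structure ($\alpha=0$), not the behaviour near $\alpha=2\pi$. So the statement ``core length vanishes only at $\alpha=0$'' is not something you can invoke for an arbitrary sequence with $\alpha_{ij}\to 2\pi$ and $\ell_{ij}\to 0$. Replace this subcase by the paper's length estimate and the argument goes through cleanly.
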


\textbf{\noindent Proof of Lemma (\ref{toros contem geod}) : }

Suppose for a contradiction that the statement of the lemma does not holds.
Without loss of generality, there exists $k_{0}\in\Lambda\cup\Omega$ such that
all loops $\mu_{k_{0}i}$ ($i\in\mathbb{N}$) belongs to the same homotopy
class. By construction, this implies that the loops $\mu_{k_{0}i}$
($i\in\mathbb{N}$) are the same loop, say $\mu$.

Suppose first that $k_{0}\in\Lambda$. By construction,
\begin{equation}
\zeta_{k_{0}i}\circ\left(  f_{k_{0}i}\right)  _{\ast}\left(  \mu\right)
=\zeta_{k_{0}i}\left(  f_{k_{0}i}\circ\mu\right)  =1_{PSL_{2}\left(
\mathbb{C}\right)  }\text{ ,}\label{de merid e nula}%
\end{equation}
for all $i\in\mathbb{N}$. Because $\varphi_{k_{0}}\left(  \left[  \mu\right]
\right)  $ is a nontrivial parabolic element of $PSL_{2}\left(  \mathbb{C}%
\right)  $, we have a contradiction.

Suppose now that $k_{0}\in\Omega$. Then $k_{0}=k_{j}$, for some component
$\Sigma_{j}$ of $\Sigma_{\infty}$ whose sequence of cone angles converges to
$\alpha_{\infty j}\neq0$. Since the maps $f_{k_{0}i}$ are $\left(
1+\varepsilon_{i}\right)  $-bilipschitz embeddings (with $\varepsilon_{i}$
shrinks down to zero), the loops $f_{k_{0}i}\circ\mu$ must have bounded lengths.

As noted in the preceeding case, the sequence $R_{i}\left(  \Sigma_{j}\right)
$ of the normal injectivity radii of the component $\Sigma_{j}$ goes off to
infinity. Since $\alpha_{\infty j}\neq0$, the sequence $\mathcal{L}_{M_{i}%
}\left(  f_{k_{0}i}\circ\mu\right)  $ formed by the lengths of the loops
$f_{k_{0}i}\circ\mu$ cannot be bounded. This is a contradiction with above
paragraph.\hfill$\diamond$\bigskip

As a consequence of the above lemma, we will show that the set $\Lambda
\cup\Omega$ is empty. To do this, the following lemma will be needed:

\begin{lemma}
\label{toros contem geod}Given $k\in\Lambda$, there exists $i_{0}=i_{0}\left(
k\right)  \in N$\ such\ that the\ solid\ tori $W_{ki}$%
\ contains\ a\ simple\ closed\ geodesic $\sigma_{ki}$, for\ every $i>i_{0}$.
\end{lemma}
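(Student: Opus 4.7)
The plan is to exhibit $\sigma_{ki}$ as the core geodesic of a Margulis tube living inside $W_{ki}$. Since $k \in \Lambda$, the solid torus $W_{ki}$ contains no component of $\Sigma_{\infty}$, so $W_{ki} \subset M_i - \Sigma$ inherits a smooth hyperbolic structure. The geometric picture is one of hyperbolic Dehn surgery: the bilipschitz embeddings $f_{ki}: C_{ki} \hookrightarrow W_{ki}$ realize $W_{ki}$ as a Dehn filling of the rank-two parabolic cusp $C_k$ of $Z - \Sigma_Z$ along the meridian slope $\mu_{ki}$, and by Lemma \ref{merid nao estabiliza} these slopes cannot stabilize, so (since there are only finitely many primitive lattice classes of bounded norm) after passing to a subsequence their Euclidean lengths on a fixed cusp cross-section tend to infinity.

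Concretely, let $\delta_{0}$ be the Margulis constant in dimension three. Because $C_k$ is a rank-two parabolic cusp, the injectivity radius along $T^2 \times \{s\} \subset C_k$ shrinks to zero with $s$; combined with $t_i \to \infty$ and the $(1+\varepsilon_i)$-bilipschitz nature of $f_{ki}$, this provides, for $i$ large, a depth $s_i \to \infty$ such that the image $f_{ki}(T^2 \times [s_i, t_i])$ lies in the $\delta_0$-thin part of $M_i - \Sigma$. By enlarging $K$ at the outset we may arrange that the outer torus $T_{ki} = f_{ki}(T^2 \times \{0\})$ has injectivity radius bounded uniformly below by, say, $2\delta_0$, hence lies in the thick part. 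Let $U_{ki}$ be the connected component of the $\delta_0$-thin part of $M_i - \Sigma$ containing the deep region $f_{ki}(T^2 \times \{t_i\})$; being an open thin set separated from the complement of $W_{ki}$ by the thick torus $T_{ki}$, it is trapped inside $W_{ki}$. Proposition \ref{classif da parte fina} applied to the smooth hyperbolic manifold $M_i - \Sigma$ then forces $U_{ki}$ to contain a maximal subregion isometric to either a Margulis tube around a closed geodesic or a rank-two parabolic cusp. The cusp alternative is excluded both because $U_{ki}$ is relatively compact (contained in the compact set $W_{ki}$) and because the ends of $M_i - \Sigma$ all correspond to cone singularities with strictly positive cone angle, so $M_i - \Sigma$ has no genuine parabolic cusps. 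Hence $U_{ki}$ is a Margulis tube around a simple closed geodesic $\sigma_{ki} \subset W_{ki}$, which is the required core geodesic.

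The main obstacle is ensuring that the thin component $U_{ki}$ is genuinely confined to the interior of $W_{ki}$ and is not of cusp type. The confinement is handled by the careful choice of $K$ so that $T_{ki}$ stays uniformly in the thick part; the exclusion of the cusp alternative rests on combining the compactness of $W_{ki}$ with the observation that cone-singularity ends of $M_i - \Sigma$ involve an elliptic rotation by the cone angle in their holonomy and hence are not rank-two parabolic cusps. An alternative, purely algebraic route avoids Proposition \ref{classif da parte fina} altogether: the relation $\zeta_{ki} \circ (f_{ki})_{\ast}(\mu_{ki}) = 1$, together with the convergence $\zeta_{ki} \circ (f_{ki})_{\ast} \to \varphi_k$ to a faithful rank-two parabolic representation and the growing lattice norm of $\mu_{ki}$ from Lemma \ref{merid nao estabiliza}, places us in the regime of Thurston's hyperbolic Dehn surgery theorem; the image of $\zeta_{ki}$ is then forced to be a cyclic loxodromic subgroup of $PSL_2(\mathbb{C})$ whose axis descends to a simple closed geodesic inside $W_{ki}$ with translation length tending to zero.
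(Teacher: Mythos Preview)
Your argument is correct and reaches the same conclusion, but by a genuinely different mechanism from the paper. The paper does not invoke Proposition~\ref{classif da parte fina} here at all. Instead it fixes $\delta$ equal to half the infimum of the injectivity radii of $Z-\Sigma_Z$ over $C_{k1}$, uses the bilipschitz convergence to ensure that every point of $T_{ki}$ has injectivity radius larger than $\delta$, and then argues by bounded volume versus unbounded diameter that $W_{ki}$ must contain a homotopically nontrivial loop $\gamma_{ki}$ of length less than $\delta$. It then applies Kojima's curve-shortening lemma \cite[Lemma~1.2.4]{Koj} to homotope $\gamma_{ki}$ to a closed geodesic $\sigma_{ki}$ of length at most $\delta$; the injectivity-radius barrier on $T_{ki}$ traps the entire homotopy inside $W_{ki}$. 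Simplicity is obtained by a separate iteration: if $\sigma_{ki}$ self-intersects, one extracts from a crossing a strictly shorter nontrivial loop and repeats, the process terminating because injectivity radius is bounded below on the compact $W_{ki}$. Your route through the thin-part classification is arguably cleaner in that the core of a Margulis tube is automatically simple, so no iteration is needed; on the other hand it rests on the heavier Proposition~\ref{classif da parte fina}, and your sentence ``by enlarging $K$ \ldots\ injectivity radius bounded uniformly below by, say, $2\delta_0$'' is not quite right as written, since nothing guarantees $Z-\Sigma_Z$ has points of injectivity radius $2\delta_0$. The fix is to take the thinness threshold to be the smaller of a Margulis constant and the uniform lower bound you already have on $r_{inj}$ along $T_{ki}$, which is precisely how the paper calibrates its $\delta$. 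Finally, your alternative algebraic route via Thurston's Dehn-surgery theorem and Lemma~\ref{merid nao estabiliza} is also not what the paper does here; in the paper, Dehn surgery enters only later, in the proof of Lemma~\ref{lambda omega e vazio}, \emph{after} the present lemma has already supplied the geodesics $\sigma_{ki}$.
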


\textbf{\noindent Proof of Lemma (\ref{toros contem geod}) : }Fix $k\in
\Lambda$ and let%
\[
\displaystyle\delta=\frac{\inf\left\{  r_{inj}^{Z-\Sigma_{Z}}\left(  z\right)
\;;\;z\in C_{k1}\right\}  }{2}>0\text{.}%
\]
Since the map $f_{ki_{|_{C_{k1}}}}:C_{k1}\rightarrow B_{ki}$ becomes closer
and closer to isometries, there exists $i_{1}\in\mathbb{N}$ such that
\[
r_{inj}^{M_{i}}\left(  q\right)  >\delta\text{,}%
\]
for all $i>i_{1}$ and for all $q\in B_{ki}$ (in particular, for all $q\in
T_{ki}$).\bigskip

\noindent\textbf{Claim : }\textit{There is }$i_{2}\in\mathbb{N}$\textit{\ such
that, for all }$i>i_{2}$\textit{, we can find a loop }$\gamma_{ki}%
$\textit{\ in }$W_{ki}$\textit{\ which is homotopically nontrivial in the
interior }$M-\Sigma$\textit{\ and has length smaller than }$\delta$%
\textit{.}\bigskip

\noindent\textbf{Proof of Claim : }Consider the loops constituted by two
geodesic segments with same ends and equal lengths which, furthermore, are
smaller than $\frac{\delta}{2}$. Note that there loops are always
homotopically nontrivial, otherwise we would obtain, after development, two
distinct geodesic arcs with the same ends and equal lengths in $\mathbb{H}%
^{3}$, what is not possible.

The fact that $W_{ki}$ does not admit this type of loop in its interior is
equivalent to saying that all points of $W_{ki}$ have injectivity radius not
smaller than $\frac{\delta}{2}$. This is a contradiction because the sequence
$Vol\left(  M_{i}\right)  $ is uniformly bounded from above (see
\ref{cota superior volume}) and the diameter of components $W_{ki}$ becomes
infinite.\hfill$\diamond$\bigskip

Consider $i_{o}=\max\left\{  i_{1},i_{2}\right\}  $ and fix $i>i_{0}$. Let
$\gamma_{ki}\subset W_{ki}$ be a loop as above. According to \cite[Lemma
1.2.4]{Koj}, the loop $\gamma_{ki}$ is freely homotopic (in $M-\Sigma$) to a
closed geodesic $\sigma_{ki}\subset M-\Sigma$. Moreover, the length of
$\sigma_{ki}$ is smaller than $\delta$ because the length of loops is strictly
decreasing along this homotopy. Because the points of the torus $T_{ki}$ have
injectivity radius bigger than $\delta$, all the loops involved in this
homotopy must lie entirely in the interior of $W_{ki}$. In particular,
$\sigma_{ki}\subset W_{ki}$.

If $\sigma_{ki}$ is not simple, then it gives rise to a loop $\gamma
_{ki}^{\prime}$ constituted by two geodesic segments with same ends and equal
lengths which are smaller than $\frac{\delta}{4}$. This implies that the
injectivity radius of the ends of $\gamma_{ki}^{\prime}$ is smaller than
$\frac{\delta}{4}$. We can apply the same construction for the loop
$\gamma_{ki}^{\prime}$ in order to obtain a new closed geodesic $\sigma
_{ki}\subset W_{ki}$ whose length is smaller than $\frac{\delta}{4}$. Since
the injectivity radius of points of $W_{ki}$ bounded from below by
compactness, this process must end after a finite number of steps and
therefore we can suppose that $\sigma_{ki}$ is simple. This completes the
proof of Lemma \textbf{(\ref{toros contem geod}).}\hfill$\diamond$\bigskip

The following lemma shows that $\Sigma_{\infty}$ is not empty and the cone
angles of its components goes to zero. Moreover the map between the components
of $\Sigma_{\infty}$ and the components of $\mathcal{Z}$ must be a bijection.

\begin{lemma}
\label{lambda omega e vazio}The set $\Lambda\cup\Omega$ is empty.
\end{lemma}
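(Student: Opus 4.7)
The plan is to argue by contradiction: assume that some $k\in\Lambda\cup\Omega$ exists, and derive a contradiction from the convergence of holonomy representations \eqref{conv homo} combined with Lemma \ref{merid nao estabiliza}, which provides an infinite collection of pairwise distinct meridian classes $\mu_{ki}\in\pi_1(T^2\times\{0\})\cong\mathbb{Z}^2$. The strategy is to exploit the rigidity of the rank-two parabolic group $\varphi_k(\mathbb{Z}^2)$ against the fact that $\zeta_{ki}$ is an abelian representation whose image is confined to the one-complex-parameter subgroup of $PSL_2(\mathbb{C})$ generated by a rotation $r_{\alpha_i}$ around and a loxodromic translation $t_{c_i}$ along the geodesic core of $W_{ki}$ (smooth, $\alpha_i=2\pi$, when $k\in\Lambda$; singular with cone angle $\alpha_i=\alpha_{ij_k}\geq\alpha_0>0$, after extracting a subsequence, when $k\in\Omega$).

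First I would show that the tube radius $R_{ki}$ of $W_{ki}$ tends to infinity: since $T^2\times\{0\}$ carries a fixed flat metric and primitive classes of bounded length are finite in number, the distinct meridians $\mu_{ki}$ satisfy $\ell_{T^2\times\{0\}}(\mu_{ki})\to\infty$; pulling back through the $(1+\varepsilon_i)$-bilipschitz maps $f_{ki}$, the meridian length on $T_{ki}$, equal to $\alpha_i\sinh(R_{ki})$, also diverges, so $R_{ki}\to\infty$. Combining this with the bound $\mathrm{Vol}(M_i)<\mathrm{Vol}(M_0)$ of \eqref{cota superior volume} applied to the tube of volume $\tfrac{\alpha_i}{2}\ell_i\sinh^2(R_{ki})$ forces the core length $\ell_i$ to zero, hence $\mathrm{Re}(c_i)\to 0$. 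Next, I would fix a basis $\{e_1,e_2\}$ of $\pi_1(T^2\times\{0\})$ with $\varphi_k(e_j)(z)=z+w_j$, where $w_1,w_2\in\mathbb{C}$ are $\mathbb{R}$-linearly independent, and use the convergence of holonomies together with a conjugation analysis in the upper half space model (transporting the tube axis toward the parabolic fixed point at infinity) to derive $s_2(i)/s_1(i)\to w_2/w_1$, where $s_j(i)=i\alpha_ix_j(i)+c_iy_j(i)$ is the complex exponent of $\zeta_{ki}((f_{ki})_{\ast}e_j)=\mathrm{diag}(e^{s_j(i)/2},e^{-s_j(i)/2})$ and $\bigl(\begin{smallmatrix}x_1&y_1\\x_2&y_2\end{smallmatrix}\bigr)$ is the change of basis from $\{e_1,e_2\}$ to the meridian--longitude basis $\{m_i,l_i\}$ of $\pi_1(W_{ki}\setminus\mathrm{core})$. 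When $k\in\Lambda$ the rotational term vanishes and $s_j(i)=c_iy_j(i)$, whence the ratio equals $y_2(i)/y_1(i)\in\mathbb{Q}\subset\mathbb{R}$. When $k\in\Omega$, the determinantal constraint $x_1(i)y_2(i)-x_2(i)y_1(i)=\pm 1$ combined with $\mathrm{Re}(c_i)\to 0$ and the requirement $s_j(i)\to 0$ forces the ratio once again to be a limit of real numbers. In either case, $w_2/w_1\in\mathbb{R}$, contradicting the independence of $w_1$ and $w_2$, and therefore $\Lambda\cup\Omega=\emptyset$.

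The main obstacle is the last step, the asymptotic analysis in the $\Omega$ case. Here both summands $i\alpha_ix_j(i)$ and $c_iy_j(i)$ can be individually large and must cancel modulo $2\pi i$, so one must simultaneously track the integer coefficients $x_j(i),y_j(i)$ and the conjugating Möbius transformations $g_i$ that move the singular axis out to the cusp at infinity; a priori there is a real two-parameter family of abelian representations of $\mathbb{Z}^2$ degenerating to parabolic representations, and the delicate point is to check that the particular family arising from our geometric situation cannot reproduce a rank-two parabolic image. The determinantal identity together with the volume-driven estimate $\mathrm{Re}(c_i)\to 0$ is precisely the ingredient that confines the limiting ratio to the real line and thereby closes the argument.
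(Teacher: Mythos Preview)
Your approach is quite different from the paper's. The paper argues via hyperbolic Dehn surgery and Mostow rigidity: using Lemma~\ref{toros contem geod} it finds a simple closed geodesic $\sigma_{ki}$ inside each nonsingular $W_{ki}$ (for $k\in\Lambda$), enlarges the link to $\Sigma'=\Sigma\cup\bigcup_{k\in\Lambda}\sigma_{ki}$, and performs on the complete structure $\mathcal{M}_0$ on $M-\Sigma'$ the Dehn fillings prescribed by the meridians $\mu_{ki}$. The resulting manifolds are all diffeomorphic to $M-\Sigma$, hence have equal hyperbolic volume by Mostow; but since the filling slopes are pairwise distinct (Lemma~\ref{merid nao estabiliza}), Thurston's Dehn surgery theorem forces the volumes to increase strictly towards $\mathrm{Vol}(\mathcal{M}_0)$, a contradiction. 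Your route instead stays at the level of peripheral holonomy, trying to show directly that abelian representations sharing a common axis cannot degenerate to a rank-two parabolic group. For $k\in\Lambda$ this is essentially sound (though note that positing a geodesic core with complex length $c_i$ is exactly the content of Lemma~\ref{toros contem geod}, which you should invoke explicitly).

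For $k\in\Omega$, however, there is a genuine gap that is not merely a matter of filling in details: your assertion that the determinantal constraint together with $\mathrm{Re}(c_i)\to 0$ forces $w_2/w_1\in\mathbb{R}$ is false at the level of abstract representations. Take $\alpha_i\equiv\alpha_\infty\in(0,2\pi)$, set $u_i=\tfrac{\alpha_\infty}{1+n_i^2}(1+in_i)$ for integers $n_i\to\infty$, and let $\rho_i(e_1),\rho_i(e_2)$ be the diagonal elements with exponents $u_i$ and $iu_i$. Then $\mu_i=n_ie_1+e_2$ has $\rho_i(\mu_i)$ elliptic of angle exactly $\alpha_\infty$, the longitude $\lambda_i=-e_1$ has translation length $\tfrac{\alpha_\infty}{1+n_i^2}\to 0$, the pair $(\mu_i,\lambda_i)$ is a $\mathbb{Z}$-basis with the $\mu_i$ pairwise distinct, yet after conjugating by $z\mapsto z-1/u_i$ the sequence converges to the rank-two parabolic group with $w_2/w_1=i$. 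So the obstruction you are looking for cannot be extracted from the peripheral holonomy alone; the paper's global volume argument is doing essential work that your local analysis does not replace.
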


\textbf{\noindent Proof of Lemma (\ref{lambda omega e vazio}) : }According to
the above lemma, we can suppose the existence of a simple closed geodesic
$\sigma_{ki}$ in the solid torus $W_{ki}$, for every $i\in\mathbb{N}$ and
every $k\in\Lambda$. If the manifolds $M_{i}$ are regarded as hyperbolic
cone-manifolds with topological type $\left(  M,\Sigma^{\prime}\right)  $,
where%
\[
\Sigma^{\prime}=\Sigma\cup%
%TCIMACRO{\tbigcup \limits_{k\in\Lambda}}%
%BeginExpansion
{\textstyle\bigcup\limits_{k\in\Lambda}}
%EndExpansion
\sigma_{ki}%
\]
and the cone angles on the geodesics $\sigma_{ki}$ are equal to $2\pi$, it
follows from Lemma (\ref{Classificacao de Toros}) that the tori $T_{ki}$ are
parallel to the geodesics $\sigma_{ki}$. In addition, $M-\Sigma^{\prime}$
admits a complete hyperbolic structure (see \cite{Koj2}) that will be denoted
by $\mathcal{M}_{0}$.

For all $i\in\mathbb{N}$ and all $k\in\Lambda$, denote the homotopy class of
the loop $\mu_{ik}$ by $\left(  p_{ki},q_{ki}\right)  \in\mathbb{Z\times
\mathbb{Z}}\thickapprox\pi_{1}C_{k}$. Without loss of generality, the
Thurston's hyperbolic Dehn surgery (\cite[theorem 1.13]{CHK}) gives a sequence
of complete hyperbolic manifolds $\mathcal{M}\left(  p_{i1},q_{i1}%
,\ldots,p_{im},q_{im}\right)  $ diffeomorphic to $M-\Sigma$ and such that%
\begin{equation}
V_{i}:=Vol\left(  \mathcal{M}\left(  p_{1i},q_{1i},\ldots,p_{mi}%
,q_{mi}\right)  \right)  <Vol\left(  \mathcal{M}_{0}\right)  \text{,}%
\label{volume do dehn filling e menor}%
\end{equation}
where $\left(  p_{ki},q_{ki}\right)  =\infty$, for all $i\in\mathbb{N}$ and
all $k\in\left\{  1,\ldots,m\right\}  -\Lambda$.

Since, for each $k\in\Lambda$, the pairs $\left(  p_{ki},q_{ki}\right)
_{i\in\mathbb{N}}$ are pairwise distinct (since the homotopy classes of the
loops $\mu_{ik}$ are pairwise distinct), a subsequence $\mathcal{M}\left(
p_{1i_{s}},q_{1i_{s}},\ldots,p_{mi_{s}},q_{mi_{s}}\right)  $ such that%
\[
\lim_{s\rightarrow\infty}\left\Vert \left(  p_{ki_{s}},q_{ki_{s}}\right)
\right\Vert =\lim_{s\rightarrow\infty}\left(  p_{ki_{s}}\right)  ^{2}+\left(
q_{ki_{s}}\right)  ^{2}=\infty\qquad,\;\text{for every }k\in\Lambda
\]
always exists. Thurston's hyperbolic Dehn surgery then gives%
\begin{equation}
\lim_{s\rightarrow\infty}V_{i_{s}}=Vol\left(  \mathcal{M}_{comp}\right)
\text{.}\label{vol conv para volcompleta}%
\end{equation}

Recall that the Riemannian volume of a complete hyperbolic manifold with
finite volume is a topological invariant (Mostow's Theorem). Since the
manifolds $\mathcal{M}\left(  p_{i1},q_{i1},\ldots,p_{im},q_{im}\right)  $ are
diffeomorphic, the sequence $V_{i}$ must be constant. This contradicts the
statements \ref{volume do dehn filling e menor} and
\ref{vol conv para volcompleta}. Hence $M_{i}-\mathcal{M}_{i}$ cannot have
nonsingular components. Therefore, $\Sigma_{\infty}\neq\emptyset$ and the map
between the components of $\Sigma_{\infty}$ and the components of
$\mathcal{Z}$ is a bijection.\hfill
\end{proof}

\begin{corollary}
\label{crit comp}Suppose that the sequence $M_{i}$ does not collapse and
verifies%
\[
\sup\left\{  \mathcal{L}_{M_{i}}\left(  \Sigma_{j}\right)  \;;\;i\in
\mathbb{N}\;\text{,}\;j\in\left\{  1,\ldots,l\right\}  \right\}  <\infty\text{
.}%
\]
If there is $\varepsilon\in\left(  0,2\pi\right)  $ such that the cone angles
$\alpha_{ij}$ belongs to $\left(  \varepsilon,2\pi\right]  $, then there
exists a sequence of points $p_{i_{k}}\in M-\Sigma$ such that the sequence
$\left(  M_{i_{k}},p_{i_{k}}\right)  $ converges in the Hausdorff-Gromov sense
to a compact and $3$-dimensional pointed Alexandrov space $\left(
Z,z_{0}\right)  $ (in fact homeomorphic to $M$). Moreover, there exists a
finite union of quasi-geodesics such that $Z-\Sigma_{Z}$ is a noncomplete
hyperbolic manifold of finite volume.
\end{corollary}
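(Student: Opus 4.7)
The plan is to deduce this corollary directly from Theorem \ref{wandorema de noneffodrement}, using the cone-angle lower bound to rule out the existence of complete (cusp) ends in the limit.

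First I would use the non-collapsing hypothesis to select a sequence $p_i \in M-\Sigma$ with
\[
r_0 = \inf\bigl\{ r_{inj}^{M_i}(p_i)\;;\;i\in\mathbb{N}\bigr\} > 0.
\]
By the pre-compactness of the class of Alexandrov spaces of curvature bounded below by $-1$ with respect to pointed Hausdorff-Gromov convergence, a subsequence $(M_{i_k},p_{i_k})$ converges to some pointed Alexandrov space $(Z,z_0)$. Since the ball $B_{M_i}(p_i,r_0)$ is (for each $i$) isometric to a ball of radius $r_0$ in $\mathbb{H}^3$, the limit ball $B_Z(z_0,r_0)$ is likewise isometric to such a ball; in particular $Z$ has dimension exactly $3$.

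Next I would invoke Theorem \ref{wandorema de noneffodrement}. Its hypotheses are exactly in force (non-collapsing, uniform bound on $\mathcal{L}_{M_i}(\Sigma_j)$, three-dimensional non-collapsed limit), so parts (i)--(iii) apply. In particular, $Z-\Sigma_Z$ is a hyperbolic manifold of finite volume, and the limit $Z$ is compact if and only if $\Sigma_\infty = \emptyset$.

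The key step is now to show $\Sigma_\infty = \emptyset$ under the assumption $\alpha_{ij} \in (\varepsilon,2\pi]$. This is where the cone-angle bound is used: by part (iii) of Theorem \ref{wandorema de noneffodrement}, if some component $\Sigma_j$ belongs to $\Sigma_\infty$, its cone angles $\alpha_{ij}$ converge to $0$. But by hypothesis $\alpha_{ij} > \varepsilon > 0$ for every $i$ and $j$, a contradiction. Hence $\Sigma_\infty = \emptyset$, so $Z$ is compact, and by part (ii) of the same theorem $Z$ is homeomorphic to $M$. The remaining claim that $Z-\Sigma_Z$ is a noncomplete hyperbolic manifold of finite volume is immediate from part (i), since on a compact $Z$ the union $\Sigma_Z$ of quasi-geodesics is nonempty (as $\Sigma \neq \emptyset$, and the components of $\Sigma_0 = \Sigma$ provide genuine quasi-geodesic limits in $Z$), which prevents the hyperbolic metric on the complement from being complete. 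There is no real obstacle here: the work is done in Theorem \ref{wandorema de noneffodrement}, and the corollary amounts to observing that the angle lower bound is the precise condition excluding the cuspidal alternative.
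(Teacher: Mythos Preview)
Your proposal is correct and follows exactly the intended route: the paper presents this corollary as an immediate consequence of Theorem~\ref{wandorema de noneffodrement} (no separate proof is given), and your argument---extracting a $3$-dimensional limit from non-collapse, then using part~(iii) to see that the lower bound $\alpha_{ij}>\varepsilon$ forces $\Sigma_\infty=\emptyset$ and hence compactness by part~(ii)---is precisely the reasoning implicit in the paper.
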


\begin{remark}
\label{criterios de compacidade}Suppose that $\Sigma$ is not connected. If
$\left(  M_{i},p_{i}\right)  $ is a sequence as in the statement of the
Theorem (\ref{wandorema de noneffodrement}), then the inequality%
\[
\sup\left\{  diam_{M_{i}}\left(  \Sigma\right)  \;;\;i\in\mathbb{N}\right\}
<\infty
\]
is a necessary and sufficient condition to ensure that the sequence
$diam\left(  M_{i}\right)  $ remains bounded.
\end{remark}

We have also the following less immediate corollary:

\begin{corollary}
\label{criterio ang vai p zero quando sigma e no}Let $M$ be a closed,
orientable and irreducible $3$-manifold and let $\Sigma$ be an embedded link
in $M$. Assume that there exists a sequence $M_{i}$ of hyperbolic
cone-manifolds with topological type $\left(  M,\Sigma\right)  $ and having
the same cone angles $\alpha_{i}\in\left(  0,2\pi\right]  $ for all components
of $\Sigma$. Then there is a pointed subsequence $M_{i_{k}}$ converging to
$M_{0}$ if and only if the folowing three conditions hold:

\begin{enumerate}
\item[$i.$] $\sup\left\{  \mathcal{L}_{M_{i}}\left(  \Sigma\right)
\;;\;i\in\mathbb{N}\text{ }\right\}  <\infty$,

\item[$ii.$] $\sup\left\{  diam\left(  M_{i}\right)  \;;\;i\in\mathbb{N}\text{
}\right\}  =\infty$,

\item[$iii.$] the sequence $M_{i}$ does not collapse.
\end{enumerate}
\end{corollary}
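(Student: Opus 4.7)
The plan is to argue the two implications separately, with the reverse direction carrying most of the content while the forward direction follows quickly from Theorem \ref{wandorema de noneffodrement} and Thurston's hyperbolic Dehn surgery.

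For the forward direction, suppose $(M_{i_k}, p_{i_k})$ converges in the pointed Hausdorff-Gromov sense to $M_0$. Since $M_0$ is a complete hyperbolic manifold of finite volume, its injectivity radius at the limit point is positive, so by the bilipschitz character of the convergence on compact sets (Theorem 6.20 of \cite{CHK}) one has $r_{\text{inj}}^{M_{i_k}}(p_{i_k})$ bounded below, yielding (iii). Because $M_0$ has cusps, $\operatorname{diam}(M_0) = \infty$, forcing $\operatorname{diam}(M_{i_k}) \to \infty$, which gives (ii). Finally, Thurston's hyperbolic Dehn surgery theorem applied to the convergence forces $\alpha_{i_k} \to 0$ and $\mathcal{L}_{M_{i_k}}(\Sigma_j) \to 0$, so the lengths are uniformly bounded along the subsequence; to upgrade to the full sequence one combines the uniform upper bound $\operatorname{Vol}(M_i) < \operatorname{Vol}(M_0)$ with Schl\"afli's formula to bound $\mathcal{L}_{M_i}(\Sigma_j)$ in terms of the cone angle, yielding (i).

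For the reverse direction, assume (i), (ii), (iii). Condition (iii) produces basepoints $p_i \in M-\Sigma$ with $r_{\text{inj}}^{M_i}(p_i) \geq r_0 > 0$ along a subsequence, and Alexandrov pre-compactness together with condition (i) yields a further subsequence $(M_{i_k}, p_{i_k}) \to (Z,z_0)$ converging to a $3$-dimensional pointed Alexandrov space. I then apply Theorem \ref{wandorema de noneffodrement}, which partitions $\Sigma = \Sigma_0 \sqcup \Sigma_\infty$ and describes the limit. The crucial step exploits the hypothesis that all components of $\Sigma$ share the common cone angle $\alpha_i$: Lemma \ref{lim dim 3 implica controle de comp e ang}(ii) forces the cone angles along $\Sigma_0$ to be bounded below, while Theorem \ref{wandorema de noneffodrement}(iii) forces the cone angles along $\Sigma_\infty$ to tend to $0$; under the uniform angle hypothesis these are incompatible unless one of $\Sigma_0, \Sigma_\infty$ is empty. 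If $\Sigma_\infty = \emptyset$, then $Z$ is compact and Perelman's stability theorem (as used in Corollary \ref{crit comp}) makes the $M_{i_k}$ eventually homeomorphic to $Z$ with $\operatorname{diam}(M_{i_k})$ bounded; condition (ii), combined with a second application of the non-collapsing dichotomy to a diameter-divergent subsequence, rules this out. Hence $\Sigma = \Sigma_\infty$, and the $\Sigma_0 = \emptyset$ case in the proof of Theorem \ref{wandorema de noneffodrement} endows $Z$ with a complete finite-volume hyperbolic structure whose cusps are in bijection with the components of $\Sigma$; Mostow rigidity then identifies $Z$ isometrically with $M_0$.

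The main obstacle is the basepoint coordination in the reverse direction: the non-collapsing basepoints provided by (iii) must be shown to yield a non-compact limit. The clean way I anticipate is to argue that if every non-collapsing basepoint sequence produces a compact limit, then condition (ii) forces a diameter-divergent subsequence that must collapse, hence by Theorem \ref{teoprincipal art1} $M$ is Seifert fibered or Sol, and then a separate geometric argument (comparing with the putative hyperbolic structure on $M-\Sigma$) yields a contradiction. Handling this case cleanly, together with the full-sequence upgrade of condition (i) in the forward direction, are the two places where the argument requires care beyond routine invocation of Theorem \ref{wandorema de noneffodrement}.
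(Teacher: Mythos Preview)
Your reverse direction is essentially the paper's argument with Corollary~\ref{crit comp} unpacked: the paper simply asserts that under (i)--(iii) one can extract a subsequence converging to a \emph{noncompact} three-dimensional Alexandrov space and then invokes Corollary~\ref{crit comp} to force $\alpha_{i_k}\to 0$, which by Kojima is equivalent to convergence to $M_0$. Your explicit use of the $\Sigma_0/\Sigma_\infty$ dichotomy and Mostow rigidity is just this argument spelled out. The basepoint-coordination obstacle you raise---that the non-collapsing subsequence furnished by (iii) and the diameter-divergent subsequence furnished by (ii) are a priori different---is genuine, and the paper glosses over it exactly as you suspected; your proposed detour through Theorem~\ref{teoprincipal art1} is the natural repair, though note it requires ruling out the Seifert/Sol case, which the hypotheses of the corollary do not explicitly exclude.

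The forward direction is where you differ from the paper, and your Schl\"afli step is the weak point. The paper does not use Theorem~\ref{wandorema de noneffodrement} here at all. Instead it argues via Kojima \cite{Koj}: convergence of a pointed subsequence to $M_0$ is equivalent to $\alpha_{i_k}\to 0$; once the cone angles lie in $(0,\pi]$, Kojima's global uniqueness of cone-structures with angles at most $\pi$ places \emph{every} $M_i$ on the unique continuous angle-parametrized path of cone-structures terminating at the complete structure, so in fact $(M_i,p)\to M_0$ for the whole sequence and any basepoint. Items (ii) and (iii) are then immediate, and (i) follows from hyperbolic Dehn surgery since $\mathcal{L}_{M_\alpha}(\Sigma)\to 0$ along the path. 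Your attempt to upgrade (i) from the subsequence to the full sequence via Schl\"afli is not justified as written: Schl\"afli's formula $d\mathrm{Vol}=-\tfrac{1}{2}\sum_j \ell_j\, d\alpha_j$ bounds an \emph{integral} of the singular lengths by the volume defect $\mathrm{Vol}(M_0)-\mathrm{Vol}(M_i)$, not the lengths themselves, so no pointwise bound on $\mathcal{L}_{M_i}(\Sigma)$ follows without further input. The Kojima path-and-uniqueness argument is both the intended mechanism and the cleaner one.
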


\begin{proof}
By Kojima's result (see \cite{Koj}), the existence of a subsequence $M_{i_{k}%
}$ converging to $M_{0}$ is equivalent to the convergence of the cone-angles
$\alpha_{i_{k}}$ to zero.

Suppose that the sequence $\alpha_{i}$ converges to zero. Without loss of
generality, we can assume that $\alpha_{i}\in\left(  0,\pi\right]  $, for
every $i\in\mathbb{N}$. According to \cite{Koj}, there exists a continuous
path (parametrized by cone angles) of hyperbolic cone structures with
topological type $\left(  M,\Sigma\right)  $ which connects the hyperbolic
cone structure of $M_{0}$ to the complete hyperbolic structure on $M-\Sigma$.
Moreover, by uniqueness of the hyperbolic cone structures with cone angles not
bigger than $\pi$ (see \cite{Koj}), this path contains the hyperbolic cone
structures of $M_{i}$, for every $i\in\mathbb{N}$. Then for every point $p\in
M$, the sequence $\left(  M_{i},p\right)  $ converges in the Hausdorff-Gromov
sense to $\left(  M-\Sigma,p\right)  $ with the complete hyperbolic structure.
This implies the items (ii) and (iii). The item (i) is a consequence of
Thurston's hyperbolic Dehn surgery theorem which implies that the sequence
$\mathcal{L}_{M_{i}}\left(  \Sigma\right)  $ converges to zero.

Conversely, suppose now that items (i), (ii) and (iii) are true. Then there
exists a sequence of points $p_{i_{k}}\in M-\Sigma$ satisfying%
\[
\inf\left\{  r_{inj}^{M_{i}}\left(  p_{i_{k}}\right)  \;;\;k\in\mathbb{N}%
\right\}  >0
\]
and such that the sequence $\left(  M_{i_{k}},p_{i_{k}}\right)  $ converges in
the Hausdorff-Gromov sense to a noncompact and $3$-dimensional pointed
Alexandrov space $\left(  Z,z_{0}\right)  $. Corollary (\ref{crit comp}) then
shows that the sequence $\alpha_{i}$ must converge to zero.\hfill
\end{proof}

\section{Applications}

\subsection{Small links\label{no pequeno}}

An embedded link $\Sigma$ in a $3$-manifold $M$ is called small (in $M$) if it
has an open tubular neighborhood $U$ such that $M-U$ does not contain an
embedded essential surface whose boundary is empty or an a union of meridians
of $\Sigma$. An important fact due to W.Thurston and A.Hatcher (see
\cite[Lemma 3]{HT}) is that every $3$-manifold containing a small link does
not admit an embedded essential surface.

Given a $3$-manifold $M$, let $\Sigma$ be an embedded link in $M$. Suppose
there exists a sequence $M_{i}$ of hyperbolic cone-manifolds with topological
type $\left(  M,\Sigma\right)  $ and consider the sequence $\mathcal{L}%
_{M_{i}}\left(  \Sigma\right)  $ formed by the lengths of the singular set
$\Sigma$ in $M_{i}$. As a consequence of the Culler-Shalen theory (see
\cite{CS}) we have the following proposition:

\begin{proposition}
\label{criterio petit}Let $M_{i}$ be a sequence of hyperbolic cone-manifolds
with topological type $\left(  M,\Sigma\right)  $. If $\Sigma$ is a small link
in $M$, then%
\[
\sup\left\{  \mathcal{L}_{M_{i}}\left(  \Sigma_{j}\right)  \;;\;i\in
\mathbb{N}\;\text{and}\;\Sigma_{j}\text{ component of }\Sigma\right\}
<\infty\text{.}%
\]

\end{proposition}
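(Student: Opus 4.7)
The plan is to argue by contradiction via Culler--Shalen theory, in the $PSL_2(\mathbb{C})$ formulation due to Morgan--Shalen. Assume that $\sup_{i,j}\mathcal{L}_{M_{i}}(\Sigma_{j})=\infty$, and pass to a subsequence so that $\mathcal{L}_{M_{i}}(\Sigma_{j})\to\infty$ for some fixed component $\Sigma_{j}$. Consider the holonomy representations $\rho_{i}:\pi_{1}(M-\Sigma)\to PSL_{2}(\mathbb{C})$ of the hyperbolic structures on $M_{i}-\Sigma$, and on each peripheral torus $T_{k}$ fix standard generators $\mu_{k}$ (meridian) and $\lambda_{k}$ (longitude parallel to the core $\Sigma_{k}$). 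Then $\rho_{i}(\mu_{k})$ is an elliptic rotation of angle $\alpha_{ik}\in(0,2\pi]$, hence has trace bounded by $2$; while $\rho_{i}(\lambda_{k})$ commutes with $\rho_{i}(\mu_{k})$, shares its axis, and translates along it by $\mathcal{L}_{M_{i}}(\Sigma_{k})$. Consequently a primitive slope $s=a\mu_{k}+b\lambda_{k}$ on $T_{k}$ has $\rho_{i}(s)$ of translation length $|b|\,\mathcal{L}_{M_{i}}(\Sigma_{k})$, so $|\mathrm{tr}\,\rho_{i}(s)|$ remains bounded along the sequence if and only if $|b|\,\mathcal{L}_{M_{i}}(\Sigma_{k})$ does; in particular, on the torus $T_{j}$ the meridian is the only slope with this property.

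Since $\mathrm{tr}\,\rho_{i}(\lambda_{j})\to\infty$, the characters $\chi_{\rho_{i}}$ leave every compact subset of the character variety $X(\pi_{1}(M-\Sigma))$. The Morgan--Shalen construction applied at this ideal point then produces a non-trivial minimal action of $\pi_{1}(M-\Sigma)$ on an $\mathbb{R}$-tree $\mathcal{T}$, such that for a suitable rescaling the translation length in $\mathcal{T}$ of any group element is the limit of its rescaled $\rho_{i}$-translation lengths. Applying the standard Bass--Serre/Stallings argument of Culler--Shalen together with the boundary-slope refinement of Morgan--Shalen, the action on $\mathcal{T}$ yields a two-sided essential (incompressible and $\partial$-incompressible) surface $F$ properly embedded in the exterior $M-\mathrm{int}\,N(\Sigma)$. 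The boundary slope of $F$ on any torus $T_{k}$ it meets is a slope all of whose multiples act with zero translation length on $\mathcal{T}$, i.e.\ a slope of bounded $\rho_{i}$-trace along the sequence; by the calculation of the previous paragraph such a slope must be the meridian $\mu_{k}$.

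Thus $F$ is an essential surface in the exterior of $\Sigma$ whose boundary is either empty or a disjoint union of meridians of $\Sigma$, contradicting the hypothesis that $\Sigma$ is a small link in $M$. The only delicate point is the invocation of the Morgan--Shalen ideal-point construction and the correct identification of the resulting boundary slopes; these are by now classical, and what makes the argument go through here is the commuting-generator calculation, which pins down the meridian as the unique bounded-trace slope on each torus where the core length blows up.
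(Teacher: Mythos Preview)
The paper does not actually prove this proposition: it simply states it ``as a consequence of the Culler--Shalen theory (see [CS])'' and moves on. Your argument is exactly the standard unpacking of that citation, so the approach is the same and the proof is correct.

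One minor remark on packaging. Invoking the Morgan--Shalen compactification and an $\mathbb{R}$-tree is slightly heavier than what the reference [CS] provides: Bass--Serre/Stallings produces an essential surface from an action on a \emph{simplicial} tree, not an $\mathbb{R}$-tree, so as written you are implicitly relying on further machinery (abelian arc stabilisers plus Skora-type resolution) to pass from the $\mathbb{R}$-tree to a surface. The more elementary route, and the one [CS] literally supplies, is: the characters $\chi_{\rho_i}$ all satisfy $|I_{\mu_k}(\chi_{\rho_i})|\le 2$; if they leave every compact set of $X(\pi_1(M-\Sigma))$, a curve-selection argument produces an affine curve $C\subset X$ going to infinity along which every $I_{\mu_k}$ stays bounded; the Culler--Shalen valuation at an ideal point of $C$ then gives a simplicial tree, hence an essential surface whose boundary slope on each $T_k$ is the (pole-free) meridian. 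This lands in the same contradiction with smallness, but stays within the original Culler--Shalen framework that the paper cites. Either version is fine; your identification of the meridian as the unique bounded-trace slope is the key computation in both.
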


When $\Sigma$ is a small link in $M$, Theorem (\ref{teo principal}) yields the
following corollaries:

\begin{corollary}
\label{petit 1}Suppose that $M$ is a closed, orientable, irreducible and not
Seifert fibered $3$-manifold and let $\Sigma$ be an embedded small link in
$M$. Then there exists a constant $V=V\left(  M,\Sigma\right)  >0$ such that
$Vol\left(  \mathcal{M}\right)  >V$, for every hyperbolic cone-manifold
$\mathcal{M}$ with topological type $\left(  M,\Sigma\right)  $.
\end{corollary}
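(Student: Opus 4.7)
The plan is to argue by contradiction. Suppose that the infimum of volumes over all hyperbolic cone-manifolds with topological type $\left(M,\Sigma\right)$ is zero; then there is a sequence $M_{i}$ of such cone-manifolds with $\operatorname{Vol}\left(M_{i}\right)\to 0$. The strategy is to feed this sequence into Theorem \ref{teo principal} and derive a contradiction from both alternatives offered by that theorem.

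To apply Theorem \ref{teo principal} I first need the length hypothesis. This is exactly the content of Proposition \ref{criterio petit}: because $\Sigma$ is small in $M$, the Culler--Shalen machinery yields
\[
\sup\left\{\mathcal{L}_{M_{i}}\left(\Sigma_{j}\right)\;;\;i\in\mathbb{N},\;j\in\left\{1,\ldots,l\right\}\right\}<\infty,
\]
so Theorem \ref{teo principal} applies to the sequence $M_{i}$.

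Next I rule out the non-collapsing alternative. If, after passing to a subsequence, $\left(M_{i_{k}},p_{i_{k}}\right)$ falls into case (ii) of Theorem \ref{teo principal}, then by definition of non-collapse there is $r_{0}>0$ with $r_{inj}^{M_{i_{k}}}\left(p_{i_{k}}\right)\geq r_{0}$ for all $k$. The ball $B_{M_{i_{k}}}\left(p_{i_{k}},r_{0}\right)$ is therefore isometric to a ball of radius $r_{0}$ in $\mathbb{H}^{3}$, and so $\operatorname{Vol}\left(M_{i_{k}}\right)\geq V_{0}\left(r_{0}\right)>0$, contradicting $\operatorname{Vol}\left(M_{i}\right)\to 0$. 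Hence any subsequence of $M_{i}$ must collapse, and case (i) of Theorem \ref{teo principal} forces $M$ to be Seifert fibered or a $Sol$ manifold.

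The hypothesis excludes the Seifert case directly. To exclude $Sol$, I invoke the Hatcher--Thurston result recalled in section \ref{no pequeno}: a $3$-manifold containing a small link admits no embedded essential surface. Every $Sol$-manifold, however, is finitely covered by a torus bundle over $S^{1}$ whose fiber is an incompressible torus (and is itself either such a bundle or the union of two twisted $I$-bundles over Klein bottles along an incompressible torus), so it does contain essential surfaces. Thus $M$ cannot be $Sol$, and both alternatives of Theorem \ref{teo principal} lead to contradiction; the infimum is therefore strictly positive, and any $V$ below it proves the corollary. The only mildly nontrivial point in the argument is the topological exclusion of $Sol$, which is the step I expect to single out as the main obstacle; everything else is a direct reading of Theorem \ref{teo principal} together with Proposition \ref{criterio petit}.
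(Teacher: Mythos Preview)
Your argument is correct and follows the same route as the paper: contradiction, uniform length bound from Proposition~\ref{criterio petit}, the observation that $\operatorname{Vol}(M_i)\to 0$ forces collapse, and then Theorem~\ref{teo principal}(i) together with the Hatcher--Thurston fact that a manifold containing a small link has no essential surface (ruling out $Sol$). The only cosmetic differences are that the paper excludes $Sol$ up front rather than at the end, and it asserts ``therefore the sequence collapses'' without spelling out the embedded-hyperbolic-ball volume argument that you make explicit.
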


\begin{proof}
First note that $M$ is not a $Sol$ manifold. In fact every $Sol$ manifold is
foliated by essential two dimensional tori and this is not possible since
$\Sigma$ is small (see \cite[Lemma 3]{HT}).

Suppose that the lower bound $V$ does not exist. Since $\Sigma$ is small in
$M$, the non-existence of $V$ implies the existence of a sequence of
hyperbolic cone-manifolds $\mathcal{M}_{i}$ with topological type $\left(
M,\Sigma\right)  $ satisfying

\begin{enumerate}
\item[$\bullet$] $\sup\left\{  \mathcal{L}_{\mathcal{M}_{i}}\left(  \Sigma
_{j}\right)  \;;\;i\in\mathbb{N}\;\text{and}\;\Sigma_{j}\text{ component of
}\Sigma\right\}  <\infty$,

\item[$\bullet$] the sequence $Vol\left(  \mathcal{M}_{i}-\Sigma\right)  $
formed by the riemannian volumes of the hyperbolic manifolds $\mathcal{M}%
_{i}-\Sigma$ shrinks down to to zero (and therefore the sequence
$\mathcal{M}_{i}$ collapses).
\end{enumerate}

\noindent According to Therem (\ref{teo principal}), $M$ must be Seifert
fibered and this contradicts our hypothesis.\hfill
\end{proof}

\begin{corollary}
\label{petit 2}Suppose that $M$ is a closed, orientable, irreducible and not
Seifert fibered $3$-manifold and let $\Sigma$ be an embedded small link in
$M$. Given $\varepsilon\in\left(  0,2\pi\right)  $, there is a constant
$K=K\left(  M,\varepsilon\right)  >0$ such that $diam\left(  \mathcal{M}%
\right)  <K$, for every hyperbolic cone-manifold $\mathcal{M}$ with
topological type $\left(  M,\Sigma\right)  $ and having cone angles belonging
to $\left(  \varepsilon,2\pi\right]  $.
\end{corollary}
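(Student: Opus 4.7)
The plan is a compactness-and-contradiction argument exactly parallel to Corollary \ref{petit 1}, now using the compactness criterion in part (ii) of Theorem \ref{teo principal}. I would fix $\varepsilon\in(0,2\pi)$ and suppose no such constant $K$ exists. Then one can extract a sequence $\mathcal{M}_i$ of hyperbolic cone-manifolds with topological type $(M,\Sigma)$ whose cone angles all lie in $(\varepsilon,2\pi]$ and satisfying $\mathrm{diam}(\mathcal{M}_i)\to\infty$. The first step is to invoke smallness of $\Sigma$ in $M$: Proposition \ref{criterio petit} provides the uniform length bound
\[
\sup\bigl\{\mathcal{L}_{\mathcal{M}_i}(\Sigma_j)\;;\;i\in\mathbb{N},\ j\in\{1,\dots,l\}\bigr\}<\infty,
\]
so Theorem \ref{teo principal} is directly applicable.

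Next I would rule out the collapsing alternative by the same reasoning as in Corollary \ref{petit 1}: it would force $M$ to be Seifert fibered or $Sol$, but the former is excluded by hypothesis and the latter because every $Sol$ $3$-manifold is foliated by essential tori, contradicting the smallness of $\Sigma$ via \cite[Lemma 3]{HT}. This leaves the non-collapsing alternative, and part (ii) of Theorem \ref{teo principal} then produces base points $p_{i_k}\in M-\Sigma$ such that $(\mathcal{M}_{i_k},p_{i_k})$ converges in the pointed Hausdorff--Gromov sense to a three-dimensional Alexandrov space $(Z,z_0)$. Because every cone angle of every $\mathcal{M}_{i_k}$ lies in $(\varepsilon,2\pi]$, the equivalences listed in part (ii) immediately yield that $Z$ is compact and homeomorphic to $M$.

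The final step, which I regard as the main (though essentially standard) technical point, is to convert the compactness of the limit into a uniform upper bound on $\mathrm{diam}(\mathcal{M}_{i_k})$. For this I would appeal to the fact that the pointed Hausdorff--Gromov convergence is bilipschitz on compact subsets (\cite[Theorem 6.20]{CHK}, already used in the proof of Theorem \ref{teo principal}) combined with Perelman's stability theorem (also invoked there): together they provide, for $k$ large, $(1+\eta_k)$-bilipschitz homeomorphisms $\mathcal{M}_{i_k}\to Z$ with $\eta_k\to 0$, so $\mathrm{diam}(\mathcal{M}_{i_k})\to\mathrm{diam}(Z)<\infty$. This contradicts $\mathrm{diam}(\mathcal{M}_i)\to\infty$ and completes the argument.
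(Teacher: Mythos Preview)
Your argument is correct and follows the paper's approach exactly: set up the contradiction, invoke Proposition \ref{criterio petit} for the uniform length bound, rule out collapse since $M$ is neither Seifert fibered nor $Sol$ (the latter via \cite[Lemma 3]{HT}), and then use the cone-angle lower bound together with Theorem \ref{teo principal} to force a compact limit and hence bounded diameters. The paper simply asserts the final implication (``cone angles in $(\varepsilon,2\pi]$ $\Rightarrow$ $\mathrm{diam}(\mathcal{M}_i)$ bounded'') without elaboration, whereas you unpack it via bilipschitz convergence and Perelman stability; this is fine, though strictly speaking Perelman's theorem produces homeomorphisms that are Hausdorff approximations rather than global bilipschitz maps --- still sufficient to conclude $\mathrm{diam}(\mathcal{M}_{i_k})\to\mathrm{diam}(Z)<\infty$.
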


\begin{proof}
As seen in the previous corollary, $M$ is not a $Sol$ manifold. Fix
$\varepsilon\in\left(  0,2\pi\right)  $ and suppose that the upper bpund $K$
does not exist. Since $\Sigma$ is small in $M$, the non-existence of $K$
implies the existence of a sequence of hyperbolic cone-manifolds
$\mathcal{M}_{i}$ with topological type $\left(  M,\Sigma\right)  $, having
cone angles $\alpha_{ji}\in\left(  \varepsilon,2\pi\right]  $ and satisfying

\begin{enumerate}
\item[$i.$] $\sup\left\{  \mathcal{L}_{\mathcal{M}_{i}}\left(  \Sigma
_{j}\right)  \;;\;i\in\mathbb{N}\;\text{and}\;\Sigma_{j}\text{ component of
}\Sigma\right\}  <\infty$,

\item[$ii.$] the sequence $diam\left(  \mathcal{M}_{i}\right)  $ formed by the
diameters of the hyperbolic cone-manifolds $\mathcal{M}_{i}$ go to infinity.
\end{enumerate}

\noindent Since $M$ is neither Seifert fibered nor a $Sol$ manifold, it
follows from item (i) and Theorem (\ref{teo principal}) that the sequence
$\mathcal{M}_{i}$ does not collapse. Moreover, since the cone angles
$\alpha_{ji}$ belong to $\left(  \varepsilon,2\pi\right]  $, it follows that
the sequence $diam\left(  \mathcal{M}_{i}\right)  $ is bounded and this yields
a contradiction with item (ii).\hfill
\end{proof}

\subsection{Proof of Corollary \ref{corolario da conjectura}}

First, we would like to recall that the existence of a deformation $M_{\alpha
}$ as in Corollary \ref{corolario da conjectura} is a consequence of the Local
Deformation Theorem due to Hodgson and Kerckhoff \cite{HK2}.

\begin{proof}
The implication ($i\Rightarrow ii$) is immediate (see \cite{Koj}). Suppose now
that the sequence $\mathcal{L}_{M_{\alpha}}\left(  \Sigma\right)  $ converges
to $0$ when $\alpha$ converges to $\theta$. Then
\[
\sup\left\{  \mathcal{L}_{M_{\alpha_{i}}}\left(  \Sigma_{j}\right)
\;;\;i\in\mathbb{N}\text{ and }\Sigma_{j}\text{ component of }\Sigma\right\}
<\infty\text{,}%
\]
for every sequence $\alpha_{i}\in\left(  \theta,2\pi\right]  $ converging to
$\theta$. Consider such a sequence $\alpha_{i}$. Since $M$ is hyperbolic (and
therefore is neither Seifert fibered nor a $Sol$ manifold), it follows from
theorem (\ref{teo principal}) that the sequence $M_{\alpha_{i}}$ does not
collapse. Moreover, since the sequence $\mathcal{L}_{M_{\alpha_{i}}}\left(
\Sigma\right)  $ converges to zero, we must have $\lim\limits_{i\rightarrow
\infty}diam\left(  M_{\alpha_{i}}\right)  =\infty$. This concludes the proof
of the implication ($ii\Rightarrow iii$).

To prove ($iii\Rightarrow i$) take a sequence $\alpha_{i}$ satisfying item
($iii$). Again by Theorem (\ref{teo principal}), it follows that the sequence
$M_{\alpha_{i}}$ does not collapse. Moreover, since the sequence $diam\left(
M_{\alpha_{i}}\right)  $ is not bounded, we must have $\theta=0$ because all
the components of $\Sigma$ have the same cone angle. Then, by Kojima's work
(see \cite{Koj}), it follows that $M_{i}$ converges (in the Hausdorff-Gromov
sense) to $M_{0}$.\hfill
\end{proof}

\end{document}